\theoremstyle{plain}
\newtheorem{theorem}{Theorem}[section]
\newtheorem{corollary}[theorem]{Corollary}
\newtheorem{lemma}[theorem]{Lemma}
\newtheorem{proposition}[theorem]{Proposition}
\theoremstyle{definition} \newtheorem{definition}[theorem]{Definition}
\newtheorem{remark}[theorem]{Remark}
\newtheorem{example}[theorem]{Example}
\newcommand{\cF}{\mathcal F}
 \def\cC{\mathcal C}
  \def\cF{\mathcal F}
\def\cH{\mathcal H}  
\newcommand{\tr}{\mathrm{Tr}} \newcommand{\PG}{\mathrm{PG}}
\newcommand{\AG}{\mathrm{AG}}
\newcommand{\KK}{\mathbb{K}}
\newcommand{\FF}{\mathbb{F}}
\newcommand{\cV}{\mathcal V}
\newcommand{\GF}[1]{\mathbb{F}_{#1}}
\title{Minimal codes from hypersurfaces \\
in even characteristic }
\author{Angela Aguglia, Luca Giuzzi,
Giovanni Longobardi, Viola Siconolfi }
\date{}
\begin{document}

\maketitle

\begin{abstract}
The setting of projective systems can be used to study the
parameters of a projective linear code $\cC$. This can be done by considering the intersections
of the point set $\Omega$ defined by the columns of a generating matrix
for $\cC$ with the hyperplanes of a projective space.
In particular, $\cC$ is minimal if $\Omega$ is \emph{cutting set}, i.e.,
every hyperplane is spanned by its intersection with $\Omega$.
Minimal linear codes have important applications for secret sharing schemes
and secure two-party computation.

In this article we first investigate the properties of some
algebraic hypersurfaces $\cV_{\varepsilon}^r$ related to certain quasi-Hermitian varieties of $\PG(r,q^2)$, with $q=2^e$, $e>1$ odd.
These varieties give rise to a new infinite family of  linear codes which are minimal except for $r=3$ and $e\equiv 1 \pmod 4$.

In the case $r \in \{3,4\}$, we exhibit codes having at most 6 non-zero weights whose we provide the complete list. As a byproduct, we obtain $(r+1)$-dimensional codes with just $3$ non-zero weights.
We point out that linear codes with few weights are also important in  authentication codes and association schemes.

In the last part of the paper, we consider an extension of the notion
of being cutting with respect to subspaces other than hyperplanes
and introduce the definition of  a \emph{cutting gap} in order to characterize
and measure what happens when this property is not satisfied.
Finally,  we  apply these concepts to both Hermitian varieties and the hypersurfaces $\cV_\varepsilon^r$.
\end{abstract}

\noindent \textbf{Keywords:} Algebraic variety, Quasi-Hermitian variety, linear code.

\smallskip

\noindent \textbf{MSC Classification:} 94B05, 51A05, 51E20.

\section{Introduction}
Linear codes derived from projective geometry have been extensively investigated due to the rich interplay between algebraic and geometric structures. The approach of constructing
generator matrices for codes by considering the coordinates of point-sets in projective
spaces has been pioneered by~\cite{MacWilliams} and~\cite{Burton} and it has proved
to be extremely fruitful ever since.
In particular, given a non-degenerate $q$-ary linear $[n,r,d]$-code $\cC$, the \emph{projective system associated with $\cC$} is defined as the (multi)set $\Omega$ of points in $\PG(r-1,q)$ corresponding to the columns of a generator matrix of $\cC$. 
Conversely, any such multiset $\Omega$ yields a code $\cC(\Omega)$ by taking the coordinates of the points in $\Omega$ as the columns of a generator matrix. The resulting code $\cC(\Omega)$ is unique up to monomial equivalence and is referred to as the \emph{code generated} or \emph{induced by $\Omega$}.

This geometric approach is particularly effective when $\Omega$ arises from algebraic or geometric structures, as it often leads to codes with desirable features, such as minimality or a small number of distinct weights relative to the code length.

A linear $[n,r,d]$-code is said to be \emph{minimal} if every non-zero codeword is minimal, i.e., uniquely determined (up to a non-zero scalar multiple) by its support. Geometrically, this corresponds to the condition that the associated projective system $\Omega$ is a \emph{cutting set}, meaning that every hyperplane of $\PG(r-1,q)$ is spanned by its intersection with $\Omega$. Minimal codes are of interest not only for their geometric significance but also for their applications in cryptography, including secret-sharing schemes and secure two-party computation, see \cite{Massey, CMP}.

In this paper, we construct new families of linear codes with few weights, derived from hypersurfaces associated with quasi-Hermitian varieties in even characteristic. These constructions are noteworthy for their geometric novelty. Specifically, the hypersurfaces considered are linked to Buekenhout–Tits quasi-Hermitian varieties (or BT  quasi-Hermitian varieties, for short), which exist in every $\PG(r-1,q^2)$ with $r > 2$ and $q = 2^e$, where $e \geq 3$ is odd. The structure of these varieties enables the exploration of new geometric configurations in finite projective spaces.

In certain cases, the associated codes are minimal; in others, they exhibit exactly three distinct weights. Codes with few weights are particularly valuable in the design of authentication codes and association schemes (see \cite{CG}), and their weight distribution plays a crucial role in evaluating decoding performance and efficiency.

In the final part of the paper, we introduce a new concept, the \emph{cutting gap}, to quantify the extent to which a set $\Omega$ fails to satisfy the cutting property (i.e. that a subspace is generated by its intersection with $\Omega$). 
This notion aims to quantify how much a given projective system $\Omega$ deviates from being a strong blocking set (see~\cite{DGMP}) and also aims to open the way to extending the concept of minimality to higher Hamming weights.

%This notion provides a refined geometric invariant to capture the deviation from the cutting condition  beyond hyperplanes (see \cite{...}), and it opens the way to generalizing the concept of minimality to higher Hamming weights.
\begin{comment}
\textcolor{red}{The cutting gap serves a dual purpose: it enables a more nuanced investigation of the geometry of projective systems and lays the foundation for a broader theory of minimality in coding theory. This perspective is particularly relevant when studying codes arising from algebraic varieties, where interactions with subspaces of varying dimensions reveal deeper structural properties.}\footnote{Sembra ridondante.}
\end{comment}
The paper is organized as follows. In Section~\ref{prelim}, we recall foundational results on projective codes and quasi-Hermitian varieties, with particular emphasis on BT quasi-Hermitian varieties. Section~\ref{minlinq} is devoted to the analysis of hypersurfaces $\cV^r_{\varepsilon}$ associated with these varieties, and we determine the conditions under which they yield minimal codes. For $r = 3$, we compute the nonzero weights  of $\cC_{\varepsilon}^3 := \cC(\cV^3_{\varepsilon})$, while for $r \geq 3$, we construct a new family of three-weight codes.

In Section~\ref{cutgap}, we formally define the concept of cutting gap and explore its implications. We apply this framework to Hermitian varieties and study the intersections of $\cV^4_{\varepsilon}$ with planes and lines in $\PG(4,q^2)$, including an analysis of Fermat curves. As a result, we provide the complete sequence of cutting gaps for $\cV^4_{\varepsilon}$ and the full list of weights for the code $\cC_{\varepsilon}^4 := \cC(\cV^4_{\varepsilon})$.
In both cases, $r = 3$ or $r = 4$, the limit
\[
\lim_{q \to \infty} \frac{N(q)}{d(q)} = 1,
\]
where $N(q)$ denotes the code length and $d(q)$ the minimum distance. This implies that the asymptotic minimum distance of the constructed codes is relatively good.
In the following table, we summarize all the results obtained concerning the code $\cC_{\varepsilon}^r$. 

\begin{center}
\begin{tabular}{ |c|c| } 
\hline
\textbf{Code} & \textbf{Result}  \\
\hline
\multirow{4}{5em}{
\hspace{1.5em}$\mathcal{C}^3_{\varepsilon}$} & Theorem \ref{th:V3n}: length \\
& Theorem \ref{lemma:weights}: weights for $e\equiv 3 \pmod 4$   \\ 
& Theorem \ref{lemma:weights2}: weights  for $e\equiv 1\ \pmod 4$   \\ 
& Theorem \ref{thm:minimality}: minimality for $e\equiv 3 \pmod 4$    \\ 
& Corollary \ref{coro:dist}: $m$-th higher minimum weights  \rule{0pt}{2.5ex}  \\ 
\hline
\multirow{1}{5em}
{
$\mathcal{C}^r_{\varepsilon},r\geq 4$} & Theorem \ref{th:code4} : minimality for $e$ odd \rule{0pt}{2.5ex} \\ 
\hline
{$\mathcal{C}(\overline{\cV_{\varepsilon}^r})$} &   Theorem \ref{thm:codeVbar}: weights
\rule{0pt}{2.5ex} \\ 
\hline
$\mathcal{C}^4_{\varepsilon}$ &   Proposition \ref{prop:code4}:\,minimality, length and weights
\rule{0pt}{2.5ex} \\
\hline
\end{tabular}
\end{center}

\section{Preliminaries}
\label{prelim}
All codes studied in this paper are defined in the Hamming metric. 
In order to make the article more readable, we shall briefly recall in this section some classical notions from coding theory. For more details, the reader can refer to \cites{HP,TsVla}.
We shall also recall here the construction of BT quasi-Hermitian varieties, to
be used in the remainder of the work.

\subsection{Coding Theory}

Let $\cC$ be a $q$-ary linear $[n,r,d]$-code, i.e. a $r$-dimensional linear
$\GF{q}$-subspace of $\GF{q}^n$
 with minimum Hamming distance $d$. The \emph{support} of a codeword
$\boldsymbol{c}=(c_1,\dots,c_n)\in\cC$ is the set $\mathrm{supp}(\boldsymbol{c}):=\{i: c_i\neq 0\}$. The support $\mathrm{supp}(\cC)$ of a code $\cC$ is the union of the supports of its codewords. The code $\cC$ is called \textit{non-degenerate} if $\mathrm{supp}(\cC) = \{1,\ldots, n\}$; degenerate otherwise.
The \textit{dual} of an $[n,r,d]$-linear code $\cC$ is defined as 
$$\cC^{\perp} = \{\boldsymbol{c}' \in \GF{q}^n \colon \boldsymbol{c} \cdot \boldsymbol{c}' = 0\quad \forall \boldsymbol{c} \in \cC\},$$
where $\cdot$ is the standard dot product of $\GF{q}^n$.
A code is non-degenerate if its dual has minimum distance at least $2$.

\begin{definition}
A codeword $\boldsymbol{c}\in\cC$ is \emph{minimal} if for any non-zero codeword $\boldsymbol{c'}\in\cC$ with
$\mathrm{supp}(\boldsymbol{c'})\subseteq \mathrm{supp}({\boldsymbol{c}})$ we
have $\boldsymbol{c'}\in\langle\boldsymbol{c}\rangle$.
A code $\cC$ is \emph{minimal} if all of its non-zero codewords are
minimal.
\end{definition}

A code $\cC$ is \emph{projective} if its dual has minimum distance at least $3$. We observe that a $q$-ary $[n,r,d]$  code $\cC$ is projective if
and only if the columns of any of
its generator matrices are pairwise non-proportional, see~\cite{CK}. In particular, they
can be regarded as the coordinates of a set $\Omega$ of
$n$ distinct points in $\PG(r-1,q)$. We shall call $\Omega$ the \emph{projective system}
of $\cC$. Conversely, given a set $\Omega$ of $n$ distinct points in $\PG(r-1,q)$
we can define a linear $[n,r]$-code $\cC:=\cC(\Omega)$ whose generator
matrix has as columns the coordinates of the points of $\Omega$. Clearly,
$\cC(\Omega)$ is not uniquely defined as a code, but it is none-the-less unique up
to monomial equivalence.

We recall the following result from from \cite{ABN} and \cite{TQLZ}:
\begin{theorem}\label{thm:min_code}
Let $\Omega$ be a set of $N$ points of $\PG(r-1, q)$ such that $\langle \Omega \rangle = \PG(r-1, q)$. For
each $i \in \{1,\ldots, N\}$, let $\boldsymbol{\textnormal{v}}_i \in \GF{q}^{r}$, 
 be a fixed representative of a point $P_i= \langle \textbf{\textnormal{v}}_i \rangle \in \Omega$ and
denote by $\cC(\Omega)$ the projective linear code having generator matrix whose columns are
the vectors $\textbf{\textnormal{v}}_i$. Then $\cC(\Omega)$ is a minimal code if and only if for any hyperplane $\Pi$ of
$\PG(r-1, q)$,
\[
\langle\Pi \cap \Omega \rangle  = \Pi, 
\]
that is $\Omega$ is \emph{cutting}.
\end{theorem}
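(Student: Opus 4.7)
The plan is to set up a dictionary between codewords of $\cC(\Omega)$ and hyperplanes of $\PG(r-1,q)$ and then translate the notion of minimality into a geometric statement. Write $G$ for the generator matrix with columns $\mathbf{v}_1,\dots,\mathbf{v}_N$, so that every codeword has the form $\mathbf{c}_{\mathbf{u}} = \mathbf{u}G$ for some $\mathbf{u}\in\GF{q}^r$, with $i$-th entry $\mathbf{u}\cdot\mathbf{v}_i$. The key identification is that the \emph{zero set} of $\mathbf{c}_{\mathbf{u}}$ corresponds precisely to the points of $\Omega$ lying on the hyperplane $\Pi_{\mathbf{u}}:=\{[\mathbf{x}]\in\PG(r-1,q):\mathbf{u}\cdot\mathbf{x}=0\}$, and the correspondence $[\mathbf{u}]\leftrightarrow\Pi_{\mathbf{u}}$ is a bijection between non-zero classes in $\GF{q}^r/\GF{q}^*$ and hyperplanes of $\PG(r-1,q)$. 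In particular, since $\langle\Omega\rangle=\PG(r-1,q)$, no hyperplane contains all of $\Omega$, so $\mathbf{c}_{\mathbf{u}}\neq 0$ whenever $\mathbf{u}\neq 0$.

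With this dictionary, I would rephrase the relevant inclusions. For non-zero $\mathbf{u},\mathbf{u}'$, the condition $\mathrm{supp}(\mathbf{c}_{\mathbf{u}'})\subseteq\mathrm{supp}(\mathbf{c}_{\mathbf{u}})$ is equivalent to $\Pi_{\mathbf{u}}\cap\Omega\subseteq\Pi_{\mathbf{u}'}\cap\Omega$, which in turn is equivalent to $\Pi_{\mathbf{u}'}\supseteq\langle\Pi_{\mathbf{u}}\cap\Omega\rangle$. Moreover $\mathbf{c}_{\mathbf{u}'}\in\langle\mathbf{c}_{\mathbf{u}}\rangle$ if and only if $\mathbf{u}'$ is a scalar multiple of $\mathbf{u}$, i.e.\ $\Pi_{\mathbf{u}'}=\Pi_{\mathbf{u}}$.

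For the ($\Leftarrow$) direction, assume $\Omega$ is cutting. Given any non-zero $\mathbf{c}_{\mathbf{u}}$ and any non-zero $\mathbf{c}_{\mathbf{u}'}$ with $\mathrm{supp}(\mathbf{c}_{\mathbf{u}'})\subseteq\mathrm{supp}(\mathbf{c}_{\mathbf{u}})$, the inclusion $\Pi_{\mathbf{u}'}\supseteq\langle\Pi_{\mathbf{u}}\cap\Omega\rangle=\Pi_{\mathbf{u}}$ of hyperplanes forces $\Pi_{\mathbf{u}'}=\Pi_{\mathbf{u}}$, giving $\mathbf{c}_{\mathbf{u}'}\in\langle\mathbf{c}_{\mathbf{u}}\rangle$. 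Hence $\cC(\Omega)$ is minimal. For the ($\Rightarrow$) direction, I would argue contrapositively: suppose some hyperplane $\Pi=\Pi_{\mathbf{u}}$ satisfies $W:=\langle\Pi\cap\Omega\rangle\subsetneq\Pi$. Then $W$ has projective dimension at most $r-3$, so it is contained in at least two distinct hyperplanes; pick one $\Pi_{\mathbf{u}'}\neq\Pi_{\mathbf{u}}$. Both $\mathbf{c}_{\mathbf{u}}$ and $\mathbf{c}_{\mathbf{u}'}$ are non-zero (as $\Omega$ spans the ambient space), $\mathrm{supp}(\mathbf{c}_{\mathbf{u}'})\subseteq\mathrm{supp}(\mathbf{c}_{\mathbf{u}})$ by the inclusion $\Pi\cap\Omega\subseteq\Pi_{\mathbf{u}'}$, yet $\mathbf{c}_{\mathbf{u}'}\notin\langle\mathbf{c}_{\mathbf{u}}\rangle$ because $\Pi_{\mathbf{u}'}\neq\Pi_{\mathbf{u}}$; so $\mathbf{c}_{\mathbf{u}}$ is not minimal.

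The proof is essentially a bookkeeping argument and I do not expect a substantial obstacle; the only point that requires care is ensuring both codewords produced in the converse are non-zero, which follows cleanly from the hypothesis $\langle\Omega\rangle=\PG(r-1,q)$, and the fact that a projective subspace of codimension at least $2$ is always contained in more than one hyperplane, which is a standard linear-algebra count in the dual space.
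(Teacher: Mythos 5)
The paper does not actually prove this statement: it is recalled from \cite{ABN} and \cite{TQLZ} without proof, so there is no in-paper argument to compare against. Your proof is correct and is essentially the standard argument from those references: the dictionary $[\mathbf{u}]\leftrightarrow\Pi_{\mathbf{u}}$ between nonzero codewords up to scalars and hyperplanes, the translation of $\mathrm{supp}(\boldsymbol{c}_{\mathbf{u}'})\subseteq\mathrm{supp}(\boldsymbol{c}_{\mathbf{u}})$ into $\langle\Pi_{\mathbf{u}}\cap\Omega\rangle\subseteq\Pi_{\mathbf{u}'}$, and the observation that a proper subspace of a hyperplane lies in at least two distinct hyperplanes are exactly the right ingredients. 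You also correctly handle the two points where care is needed, namely that the spanning hypothesis $\langle\Omega\rangle=\PG(r-1,q)$ makes $\mathbf{u}\mapsto\mathbf{u}G$ injective (so all the codewords involved are nonzero and proportional codewords correspond to equal hyperplanes), and that a subspace of projective dimension at most $r-3$, including the empty set, is contained in more than one hyperplane. No gaps.
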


Ashikhmin and Barg in \cite{AB} provided a sufficient
condition so that an $[n,r]$-linear
code $\cC$ over $\GF{q}$ is minimal, namely
\begin{equation}\label{suff}
 \frac{w_{\min}}{w_{\max}}>\frac{q-1}{q}, \end{equation}
where $w_{\min}$ and $w_{\max}$ are respectively the minimum and
maximum weight of non-zero codewords of $\cC$.

The intersection numbers of a projective system with the hyperplanes is
closely related
to the Hamming weights of a code. In particular the minimum distance of a code $\cC=\cC(\Omega)$, with $\Omega$ a
projective system, is
\[
d= |\Omega|-\max\{|\Omega\cap\Pi|: \Pi \text{ hyperplane of } \PG(\langle\Omega\rangle)\}.
\]
Generalized Hamming weights for linear codes have been introduced by Wei in~\cite{W91} as a
way to characterize the performance for a particular type of a linear code.
\begin{definition}
Let $\cC$ be a code and ${\mathcal D}$ a subcode of $\cC$.
The \emph{$k$-th generalized Hamming weight} of $\cC$ is
\[ d_{k}(\cC):=\min\{ |\mathrm{supp}({\mathcal D})|\colon {\mathcal D}\leq\cC,
\dim(\mathcal{D})=k \}.\]
\end{definition}
Clearly, $d_1(\cC)$ corresponds to the usual minimum Hamming distance $d$ of $\cC$.
More in general, it can be seen that for $1\leq k\leq r-1$ the
$k$--th Hamming weight of a projective code $\cC=\cC(\Omega)$ is given by 
\[
d_k(\cC)=|\Omega|-\max\{|\Omega\cap\Pi|: \mathrm{codim}(\Pi)=k \}.
\]

%Given a set of points $\cH$ and $C(\cH)$ as in the statement of \Cref{thm:min_code}, we define the $k$-higher weights of $C(\cH)$ as:
%\[
%d_k(C(\cH)):=|\cH|-%\max\{|\cH\cap \pi|: \pi %\text{ is a $k$-codimensional %subspace of } \PG(r,q) \}.
%\]

%If $\pi$ us a subspace of $\PG(r,q)$ we define the weight of the elements in $C(\cH)$ associated to $\pi$ hyperplane  as
%\[
%d_{\pi}(\cH):=|\cH|-|\cH\cap \pi|.
%\]

%codice q- divisibile
\subsection{Quasi-Hermitian varieties}\label{subsec:QHvar}
Quasi-Hermitian varieties in $\mathrm{PG}(r, q^2)$ are point sets having the same intersection numbers with respect to hyperplanes as a non-singular Hermitian variety $\mathcal{H}(r, q^2)$ of $\mathrm{PG}(r, q^2)$. Notice that a point set $\mathcal{S}$ in $\mathrm{PG}(r,q^2)$, with $r > 2$, having the same intersection numbers with respect to hyperplanes as a non-singular Hermitian variety also has the same number of points as $\mathcal{H}(r, q^2)$ (see \cite{S22}).

These varieties can be seen as a natural extension to higher dimensions of the notion of a unital in the projective plane.

A \emph{unital embedded} in $\mathrm{PG}(2,q^2)$ is a set of $q^3+1$ points such that each line intersects it in either $1$ or $q+1$ points. The Hermitian curve $\mathcal{H}(2,q^2)$ is a unital, known as the \emph{classical unital}.

More precisely, the cardinality of a quasi-Hermitian variety in $\mathrm{PG}(r,q^2)$ is
\[
| \mathcal{H}(r, q^2) | = \frac{(q^{r+1} + (-1)^r)(q^r - (-1)^r)}{q^2 - 1},
\]
while its intersection numbers with respect to hyperplanes are respectively
\[
| \mathcal{H}(r-1, q^2) | = \frac{(q^r + (-1)^{r-1})(q^{r-1} - (-1)^{r-1})}{q^2 - 1},
\]
and
\[
|\Pi_0 \cap \mathcal{H}(r-2, q^2)| = \frac{(q^r + (-1)^{r-1})(q^{r-1} - (-1)^{r-1})}{q^2 - 1} + (-1)^{r-1} q^{r-1},
\]
where $\Pi_0$ denotes a hyperplane tangent to the variety.

\begin{comment}

Quasi-Hermitian varieties in $\PG(r, q^2)$ are point
sets having the same intersection numbers with respect to hyperplanes
as a non-singular Hermitian variety $\cH(r, q^2)$ of $\PG(r, q^2)$. Notice that a point set $\mathcal{S}$ of $\PG(r,q^2)$, $r>2$, having the same intersection numbers with respect to hyperplanes as a non‐singular Hermitian variety has also the same number of points as $\cH(r, q^2)$ (see \cite{S22}).
They are an extension to higher dimensions of the notion of
unital in the projective plane.

A \emph{unital embedded} in $\mathrm{PG}(2,q^2)$  is a set  of $q^3+1$ points such that each line intersects it in either  $1$ or $q+1$ points. The Hermitian curve $\cH(2,q^2)$ is a unital, called the \emph{classical unital}.

More in detail, the cardinality of a
quasi-Hermitian variety of $\PG(r,q^2)$ is
\[| \cH(r, q^2) |= \frac{(q^{r+1} + (-1)^r)(q^r-(-1)^r)}
{(q^2-1)}, \]
while its intersection numbers with respect to hyperplanes are respectively
\[| \cH(r-1, q^2) |= \frac{(q^r + (-1)^{r-1})(q^{r-1}-(-1)^{r-1})}{
q^2-1} ,\]
and 
\[|\Pi_0 \cH(r-2, q^2)|= \frac{(q^r + (-1)^{r-1})(q^{r-1}-(-1)^{r-1})}{q^2-1} +
(-1)^{r-1}q^{r-1}.\]
Here $\Pi_i$ denotes an $i$-dimensional space of $\PG(r, q^2)$ and $\Pi_0\cH(r-2, q^2)$ is a cone, the join of
the vertex $\Pi_0$ to a non-singular Hermitian variety $\cH(r-2, q^2)$ in a projective subspace $\Pi_{r-2}$
which does not contain $\Pi_0$.

\end{comment}

A non-singular Hermitian variety of $\PG(r, q^2)$ is, by definition,
a quasi-Hermitian variety: \emph{the classical quasi-Hermitian variety}.

Angela Aguglia  introduced in~\cite{A} a new  family of  non-classical quasi-Hermitian varieties which are linked to Buekenhout-Tits unitals.
We shall call these varieties BT quasi hermitian varieties. They can
be constructed as follows.

Let $e$ be an odd number, $r\geq2$ and consider the projective space $\PG(r,q^2)$ of order $q^2=2^{2e}$ and dimension $r$. Denote by $$(X_0, X_1, \ldots, X_r),$$ the homogeneous coordinates of its points. Fix the hyperplane $$\Pi_{\infty}:=\{(X_0, X_1,\ldots, X_r)\in \PG(r,q^2)| \, X_0=0 \}$$ as the hyperplane of the `at infinity' and denote by $\AG(r,q^2)$  the affine space  $\PG(r,q^2)\setminus \Pi_{\infty}$ whose points have coordinates $(x_1,\ldots,x_r)$ where $x_i=X_i/X_0$ for $i=1,\ldots,r$. 

Write
\[
\tr_{q^2/q}(x):=x+x^q,\qquad \mathrm{N}_{q^2/q}(x):=x^{q+1}\qquad \text{ for all }x\in\GF{q^2},
\]
for respectively the trace and norm functions of $\GF{q^2}$ over $\GF{q}$.\\
Choose now $\varepsilon\in \GF{q^2}\setminus \GF{q}$ such that there exists a $\delta \in \GF{q}\setminus \{1\}$, $\tr_{q/2}(\delta)=1$ that satisfies $\varepsilon^2+\varepsilon+\delta=0$. Then, it is straightforward to see that $\tr_{q^2/q}
     (\varepsilon)=1$.\\
%the following equalities hold:
%\begin{enumerate}[label=\roman*)]
  %   \item   $\varepsilon^{2q}+\varepsilon^q+\delta=0$;
   %  \item   $\tr_{q^2/q}(\varepsilon)^2=\tr_{q^2/q}%(\varepsilon)$;
     %\item  
    % \item   {$\mathrm{N}_{q^2/q}(\varepsilon)=\delta$}.
     %\end{enumerate}
If $q=2^e$ with $e$ an odd number, then the function $\sigma:x\rightarrow x^{2^{\frac{e+1}{2}}}$ is an automorphism of $\GF{q}$.

Let now $\cV^r_{\varepsilon}$ be the projective variety
of $\PG(r,q^2)$ with affine equation
\begin{equation}\label{eq:V}
x_r^q+x_r=\Gamma_{\varepsilon}(x_1)+\ldots+\Gamma_{\varepsilon}(x_{r-1})
\end{equation}
where \[ \Gamma_{\varepsilon}(x)=[x+(x^q+x)\varepsilon]^{\sigma+2}+(x^q+x)^{\sigma}+(x^{2q}+x^2)\varepsilon+x^{q+1}+x^2. \]
Let also $\cH^r_\varepsilon$ be the variety  whose affine points
are the same as those of $\cV^r_{\varepsilon}$, but whose 
points at infinity are given by the Hermitian cone
\begin{equation}\label{eq:Hinf}
\cH^r_{\varepsilon,\infty}:=\{(0,X_1,\ldots,X_r)|\quad X_1^{q+1}+\ldots+X_{r-1}^{q+1}=0\}.
\end{equation}
It is shown in~\cite{A} that $\cH^r_\varepsilon$ is a non-classical quasi-Hermitian variety of $\PG(r,q^2)$ which, for $r=2$, is a Buekenhout-Tits unital.
We call $\cH^r_\varepsilon$ a \emph{Buekenhout-Tits} or \emph{BT quasi-Hermitian variety} of $\PG(r,q^2)$ for $r\geq2$.

Note that, since $\cH^r_\varepsilon$ is a quasi-Hermitian variety and it has the same affine points of $\cV^r_{\varepsilon}$, every hyperplane $\pi\neq \Pi_{\infty}$ in $\PG(r,q^2)$ meets $\cV^r_\varepsilon$ in at least one affine point.
\begin{theorem}\label{thm:cardinality}
The number of affine points  of $\cV^r_{\varepsilon}$ is $q^{2r-1}$.
\end{theorem}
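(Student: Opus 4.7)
The plan is to show that the right-hand side of the defining affine equation always lies in $\GF{q}$, and then invoke surjectivity of the trace on the left.

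Write $t:=x^q+x=\tr_{q^2/q}(x)\in\GF{q}$ and $n:=x^{q+1}=\mathrm{N}_{q^2/q}(x)\in\GF{q}$. Using characteristic $2$, note that $x^{2q}+x^2=(x^q+x)^2=t^2$, so
\[
\Gamma_{\varepsilon}(x)=(x+t\varepsilon)^{\sigma+2}+t^{\sigma}+t^{2}\varepsilon+n+x^{2}.
\]
The main computational step is to prove $\Gamma_{\varepsilon}(x)\in\GF{q}$ for every $x\in\GF{q^2}$. I will verify this term by term, using the two arithmetic facts $\tr_{q^2/q}(\varepsilon)=1$ (hence $\varepsilon^{q}=\varepsilon+1$) and $x^{q}=x+t$.

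First, I would compute
\[
(x+t\varepsilon)^{q}=x^{q}+t\varepsilon^{q}=(x+t)+t(\varepsilon+1)=x+t\varepsilon,
\]
so $y:=x+t\varepsilon\in\GF{q}$. Since $\sigma$ is an automorphism of $\GF{q}$, this immediately gives $y^{\sigma+2}\in\GF{q}$. Similarly $t^{\sigma}\in\GF{q}$ and $n\in\GF{q}$. It remains to handle $t^{2}\varepsilon+x^{2}$, and here again a direct Frobenius computation yields
\[
(t^{2}\varepsilon+x^{2})^{q}=t^{2}(\varepsilon+1)+(x+t)^{2}=t^{2}\varepsilon+t^{2}+x^{2}+t^{2}=t^{2}\varepsilon+x^{2},
\]
so this expression also lies in $\GF{q}$. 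Summing, $\Gamma_{\varepsilon}(x)\in\GF{q}$.

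With this in hand, the counting argument is immediate. For any choice of $(x_{1},\ldots,x_{r-1})\in\GF{q^2}^{r-1}$, the right-hand side of \eqref{eq:V} is a fixed element $c\in\GF{q}$. The map $x_{r}\mapsto x_{r}^{q}+x_{r}=\tr_{q^2/q}(x_{r})$ is $\GF{q}$-linear and surjective onto $\GF{q}$ with kernel of size $q$, so the equation $x_{r}^{q}+x_{r}=c$ has exactly $q$ solutions in $\GF{q^2}$. Multiplying by the number of choices of $(x_{1},\ldots,x_{r-1})$ gives $q^{2(r-1)}\cdot q=q^{2r-1}$ affine points, as claimed. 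The only real obstacle is the verification of the two Frobenius invariance identities above; everything else is bookkeeping.
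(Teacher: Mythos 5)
Your proof is correct, but it follows a genuinely different route from the paper's. The paper disposes of this statement in one line by citing the external fact (from the reference on quasi-Hermitian varieties in even characteristic) that $\cH^r_\varepsilon$ is a quasi-Hermitian variety, hence has $|\cH(r,q^2)|$ points, and that its points at infinity form the cone $P_\infty\cH(r-2,q^2)$ of size $1+q^2|\cH(r-2,q^2)|$; subtracting gives $q^{2r-1}$. You instead give a self-contained, elementary argument: you verify via Frobenius that $y=x+t\varepsilon$ and $t^2\varepsilon+x^2$ are fixed by $x\mapsto x^q$ (using $\varepsilon^q=\varepsilon+1$, which follows from $\tr_{q^2/q}(\varepsilon)=1$), conclude $\Gamma_\varepsilon(x)\in\GF{q}$, and count the $q$-element fibres of the trace map $x_r\mapsto x_r^q+x_r$. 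Both computations check out. What your approach buys is independence from the quasi-Hermitian machinery and an explicit proof that the right-hand side of \eqref{eq:V} is $\GF{q}$-valued --- a fact the paper essentially re-derives later anyway, in the proof of Theorem~\ref{lemma:weights}, where $\Gamma_\varepsilon(x_0+\varepsilon x_1)$ is shown to equal $x_0^{\sigma+2}+x_0x_1+x_1^\sigma\in\GF{q}$; your observation is consistent with that identity. What the paper's approach buys is brevity and the simultaneous handling of the hyperplane-intersection numbers, which are needed elsewhere. Either proof is acceptable here.
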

{\begin{proof}
It follows trivially from the fact that $\cH_\varepsilon$ is a quasi-Hermitian variety,  $\cH^r_{\varepsilon,\infty}$ is a cone over $\cH(r-2,q^2)$ and $|\cV^r_{\varepsilon} \cap \AG(r,q^2)|=|\cH^r_{\varepsilon} \cap \AG(r,q^2)|$.
\end{proof}}

\subsection{Bounds on the number of points of hypersurfaces}
Let $\theta_q(r)$ be the number of points of the $r$-dimensional finite projective space $\PG(r, q)$, that is
$\theta_q(r)=\frac{q^{r+1}-1}{q-1}=q^r +q^{r-1}+\ldots+1$ for $r\geq0$  and $\theta_q(-1)=0$.
We will use the following result from~\cite{HK}.
\begin{theorem}\label{RationalVar1}
Let $\mathcal X$ be a hypersurface of degree $\nu$  in $\PG(r, q)$ without an $\GF{q}$-linear component and let  
 $N_q(\mathcal X)$ be the number of $\GF{q}$-rational points of $\mathcal X$.  Then
 \begin{equation}
 N_q(\mathcal X) \leq(\nu-1)q^{r-1} +\nu q^{r-2} +\theta_q(r-3).
 \end{equation}
\end{theorem}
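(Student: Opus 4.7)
My plan is an induction on $r$, in the spirit of Serre's classical line-incidence argument. The base case is $r=2$, where the stated estimate $(\nu-1)q+\nu$ for a plane curve of degree $\nu$ with no $\GF{q}$-linear factor follows from the (even stronger) Sorensen bound $(\nu-1)q+1$; the degenerate cases $r\le 1$ are immediate since $\theta_q(-1)=0$ by convention.

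For the inductive step with $r\ge 3$, I would single out a line $L\subset\PG(r,q)$ with $|L\cap\mathcal X|=\nu$. Such a line exists as soon as the target bound is not already trivially obtained, since otherwise every line meets $\mathcal X$ in at most $\nu-1$ points and the conclusion follows from the weak estimate $|\mathcal X|\le(\nu-1)\theta_q(r-1)+1$ by a pencil count from any point of $\mathcal X$. Fix such an $L$. Each of the $\theta_q(r-2)$ hyperplanes $\Pi$ containing $L$ must satisfy $\Pi\not\subset\mathcal X$, for otherwise $\Pi$ would itself be a $\GF{q}$-linear component of $\mathcal X$. Hence $\mathcal X\cap\Pi$ is a hypersurface of degree at most $\nu$ in $\Pi\cong\PG(r-1,q)$, to which one applies the inductive hypothesis to get
\[
|\mathcal X\cap\Pi|\le M:=(\nu-1)q^{r-2}+\nu q^{r-3}+\theta_q(r-4).
\]
I would then double-count incidences between hyperplanes through $L$ and points of $\mathcal X$: a point of $L\cap\mathcal X$ lies in all $\theta_q(r-2)$ hyperplanes through $L$, while a point of $\mathcal X\setminus L$ lies in exactly $\theta_q(r-3)$ of them (namely those containing the plane $\langle L,Q\rangle$). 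This gives
\[
\theta_q(r-2)\,|L\cap\mathcal X|+\theta_q(r-3)\,|\mathcal X\setminus L|\le \theta_q(r-2)\cdot M.
\]
Substituting $|L\cap\mathcal X|=\nu$, isolating $|\mathcal X\setminus L|$, and using the identity $\theta_q(r-2)=q\,\theta_q(r-3)+1$, elementary arithmetic collapses the right-hand side to the target value $(\nu-1)q^{r-1}+\nu q^{r-2}+\theta_q(r-3)$.

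The principal obstacle is ensuring that the inductive hypothesis really applies to every hyperplane section $\mathcal X\cap\Pi$: although $\mathcal X$ has no hyperplane of $\PG(r,q)$ as a component, a particular section $\mathcal X\cap\Pi$ could contain a codimension-two linear subspace $H'\subset\PG(r,q)$, which would appear as a hyperplane component inside $\Pi$. In that situation one writes $\mathcal X\cap\Pi=H'\cup\mathcal Y$ with $\mathcal Y$ a hypersurface of degree $\nu-1$ in $\Pi$, applies the induction to $\mathcal Y$, and estimates the overcount $|H'\cap\mathcal Y|$ by observing that its points are singular points of the section. A parallel difficulty arises when no line $L$ meets $\mathcal X$ in exactly $\nu$ points, which forces a rigid structural analysis of $\mathcal X$. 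Managing these linearity degeneracies is the technically delicate core of the Homma--Kim argument; once it is in place, the incidence count sketched above mechanically yields the asserted inequality.
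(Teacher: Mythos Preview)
The paper does not prove this theorem at all: it is quoted verbatim from Homma and Kim~\cite{HK} as a black-box ingredient (see the line ``We will use the following result from~\cite{HK}'' immediately preceding the statement). So there is no ``paper's own proof'' to compare against.

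Your sketch is in fact a fair outline of the Homma--Kim argument itself: induction on $r$, a base case relying on the plane-curve bound, and for the inductive step an incidence count over the $\theta_q(r-2)$ hyperplanes through a $\nu$-secant line $L$. You have correctly identified the genuine obstacle, namely that a hyperplane section $\mathcal X\cap\Pi$ may acquire an $\GF{q}$-linear component even when $\mathcal X$ has none, so the inductive hypothesis does not apply directly. Your proposed workaround (peel off the linear piece $H'$, apply induction to the residual $\mathcal Y$ of degree $\nu-1$, and control $|H'\cap\mathcal Y|$) is the right idea but is where the real work lies; the same is true of the case where no $\nu$-secant exists. As a proof proposal this is honest and on target, but it is a sketch of the cited reference rather than something the present paper supplies or needs to supply.
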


\section{A family of minimal linear $q$-ary codes, $q$ even}
\label{minlinq}
Let $H$ be a quasi-Hermitian variety of $\PG(r, q^2)$. As shown in~\cite{ACG}, the linear code $\cC(H)$ generated by $H$ is minimal. This was established by applying Theorem \ref{thm:min_code}, since the sufficient condition for minimality given by Ashikhmin and Barg \eqref{suff} could not be used.

Consider the BT quasi-Hermitian variety $ \cH_{\varepsilon}^r = (\mathcal{V}_{\varepsilon}^r \setminus \Pi_{\infty}) \cup \mathcal{\cH_{\varepsilon,\infty}^r} $. We ask whether $\mathcal{V}_{\varepsilon}^r $ induces minimal codes.

\subsection{The variety $\cV_{\varepsilon}^3$}

We first study the projective 
linear code $\cC^3_{\varepsilon}=\cC(\cV^3_\varepsilon)$. 
%induced by the projective system of the $\GF{q^2}$-rational points of
%$\cV^3_{\varepsilon} \subset \PG(3,q^2)$.
As an initial step, we investigate the variety at infinity, defined as \( \mathcal{V}^3_{\varepsilon,\infty} := \mathcal{V}^3_{\varepsilon} \cap \Pi_{\infty} \).

\begin{lemma}\label{lemma:intersection}
Let $\Pi_{\infty}$ be the plane at infinity of $\PG(3,q^2)$. Then,
\[
\cV^3_{\varepsilon,\infty}=\begin{cases}
 \ell_0 & \text{ if } e\equiv 1\; mod\, 4\\
    \ell_1\cup \ell_2 \cup \ell_3 & \text{ if }
    e\equiv 3 \;mod \,4,
\end{cases}
\]
where $\ell_0:X_0= X_1+X_2=0$ and $\ell_i:X_0=X_1+c_iX_2=0$ for $i=1,2,3$ and
$c_1$, $c_2$ and $c_3$ are the $(2^\frac{e-1}{2}+1)$-th  roots of unity in $\GF{q^2}$.
\end{lemma}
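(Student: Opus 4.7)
The plan is to describe $\cV^3_{\varepsilon,\infty}$ as the zero set in $\Pi_\infty$ of the leading (top-degree) form of the affine polynomial
$f(x_1,x_2,x_3)=x_3^q+x_3+\Gamma_\varepsilon(x_1)+\Gamma_\varepsilon(x_2)$
defining $\cV^3_\varepsilon$, and then to factor that form over $\GF{q^2}$ by combining characteristic-$2$ algebra with a count of roots of unity.

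First I would isolate the top-degree monomial of $\Gamma_\varepsilon(x)$. Expanding
$[x+(x^q+x)\varepsilon]^{\sigma+2}=\bigl[x^\sigma+(x^q+x)^\sigma\varepsilon^\sigma\bigr]\cdot\bigl[x^2+(x^q+x)^2\varepsilon^2\bigr]$
(using that $\sigma$ is a power of $2$), the unique monomial of maximal degree is $\varepsilon^{\sigma+2}x^{q(\sigma+2)}$; every other summand of $\Gamma_\varepsilon(x)$ has strictly smaller degree, and $x_3^q+x_3$ contributes only up to degree $q\ll q(\sigma+2)$. Consequently the leading form of $f$ is
$\varepsilon^{\sigma+2}\bigl(x_1^{q(\sigma+2)}+x_2^{q(\sigma+2)}\bigr),$
and $\cV^3_{\varepsilon,\infty}$ equals its zero locus in $\Pi_\infty$.

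Next, setting $s=(e-1)/2$ and $M:=2^s+1$, one checks that $e\geq 3$ forces $M$ odd and $q(\sigma+2)=2^{e+1}M$. In characteristic $2$ this yields
$X_1^{q(\sigma+2)}+X_2^{q(\sigma+2)}=\bigl(X_1^M+X_2^M\bigr)^{2^{e+1}},$
so $\cV^3_{\varepsilon,\infty}$ is the union of lines $X_0=X_1+cX_2=0$ as $c$ runs over the $M$-th roots of unity of $\GF{q^2}$ (the point $(0\!:\!0\!:\!0\!:\!1)$ is common to all these lines). The number of such $c$ is $\gcd(M,q^2-1)$. Since $2^s\equiv -1\pmod M$ implies $q^2=2^{2e}\equiv 4\pmod M$, we obtain $\gcd(M,q^2-1)=\gcd(M,3)$; and $3\mid 2^s+1$ iff $s$ is odd iff $e\equiv 3\pmod 4$. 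Thus in that case we get exactly three lines, with the $c_i$ being precisely the cube roots of unity of $\GF{q^2}$, while for $e\equiv 1\pmod 4$ only $c=1$ qualifies, giving the single line $\ell_0$.

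The principal technical point is the first step: one must confirm that no cancellation occurs at top degree and that precisely one monomial of degree $q(\sigma+2)$ survives in $\Gamma_\varepsilon$, as the mid-degree terms coming from $x^\sigma T^2\varepsilon^2$, $x^2T^\sigma\varepsilon^\sigma$ and $T^{\sigma+2}\varepsilon^{\sigma+2}$ crowd the region just below $q(\sigma+2)$. Once this is verified, the split $q(\sigma+2)=2^{e+1}M$ with $M$ odd, the resulting factorization as a $2^{e+1}$-th power of $X_1^M+X_2^M$, and the elementary congruence computation of $\gcd(M,q^2-1)$ deliver both cases of the lemma in a mechanical fashion.
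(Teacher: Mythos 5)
Your argument is correct and follows essentially the same route as the paper: both reduce the locus at infinity to $X_1^{M}+X_2^{M}=0$ with $M=2^{\frac{e-1}{2}}+1$ and count the $M$-th roots of unity in $\GF{q^2}$ via $\gcd(M,q^2-1)$. The only cosmetic differences are that you justify the leading-form step explicitly and evaluate the gcd by the direct congruence $q^2\equiv 4 \pmod{M}$ rather than by invoking the standard formula for $\gcd(2^a+1,2^b-1)$; both yield $\gcd=3$ exactly when $e\equiv 3\pmod 4$ and $\gcd=1$ otherwise.
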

\begin{proof}
    The points in $\cV^3_{\varepsilon,\infty}$, satisfy equation
    \[
X_1^{\sigma+2}+X_2^{\sigma+2}=0.
    \]
Recalling that $\sigma: x  \longmapsto x^{2^{\frac{e+1}{2}}}$, we obtain 
    \[
   X_1^{2^{\frac{e+1}{2}}+2}+X_2^{2^{\frac{e+1}{2}}+2}= X_1^{2(2^{\frac{e-1}{2}}+1)}+X_2^{2(2^{\frac{e-1}{2}}+1)}=0
    \]
and, hence,
    \[
X_1^{2^{\frac{e-1}{2}}+1} +X_2^{2^{\frac{e-1}{2}}+1}=0.
    \]
    An easy non-trivial solution to this equation is the point $P_{\infty}(0,0,0,1)$. In order to find other solutions, we assume $X_2\neq 0$ and study
    \[\left (
\frac{X_1}{X_2}\right )^{2^{\frac{e-1}{2}}+1}=1.
    \]
    This reduces to the study of the $(2^{\frac{e-1}{2}+1})$-th roots of unity in $\GF{q^2}$ and, hence, to compute the $\gcd(2^{\frac{e-1}{2}}+1, q^2-1)$.
Let $d=\gcd \left (\frac{e-1}{2},2e \right )$. Since \[\gcd \left (2^\frac{e-1}{2}+1,2^{2e}-1 \right )=\begin{cases}
    1 &  \text{if $\frac{2e}{d}$ is odd} \\
    2^d+1 & \text{otherwise}
\end{cases}
\]
and $e$ is odd, then it is enough to compute $d$.\\
If $e \equiv 1 \pmod 4$, then $e=4m+1$ for some $m \in \mathbb{N}$. So, $d=\gcd(2m,2e)=2\gcd(m,e)$, implying that $2e/d$ is odd. If $e \equiv 3 \pmod 4$, then $e=4m+3$ for some $m \in \mathbb{N}$ and $$d=\gcd(2m+1,2(4m+3))=\gcd(2m+1,4m+3)=1.$$
This completes the proof.
\begin{comment}
    Let $p$ an odd prime number such that $p|GCD(2^{\frac{e-1}{2}}+1, q^2-1)$, then we have
    \[
   q^2=2^{2e}\equiv  1 \;mod \, p 
    \]
    and 
    \[
    2^{\frac{e-1}{2}}\equiv -1 \;mod \,p \Rightarrow 2^{e-1}\equiv  1 \;mod\, p \iff 2^e\equiv  2 \;mod\, p \iff 2^{2e}\equiv  4 \;mod \,p
    \]
    we obtain an absurd unless $4\equiv 1\; mod \, p$,
    so the only possible prime number $p$ such that $p|GCD(2^{\frac{e-1}{2}}+1, q^2-1)$ is $3$. In particular we see that $q^{2}-1$ is always a multiple of $3$. We study 
    \[
    2^{\frac{e-1}{2}}+1\equiv 0 \;mod \,3 \iff \frac{e-1}{2}\equiv  1 \; mod\, 2 \iff e\equiv  3 \;mod\, 4.
    \]
The thesis follows.   
\end{comment}
\end{proof}

By  \Cref{thm:cardinality},  we have that $| \cV^3_{\varepsilon} \setminus \Pi_\infty|=q^{5}$. So, by \Cref{lemma:intersection}, we get the following.
\begin{theorem}
  \label{th:V3n}
    \begin{equation}\label{formula:n}
   |\cV^3_{\varepsilon}|=
   \begin{cases}
       q^{5}+q^2+1 & if\quad   e\equiv 1 \pmod 4 \\
      q^{5}+ 3q^2+1 & if \quad  e\equiv 3 \pmod 4. 
   \end{cases}
 \end{equation}
\end{theorem}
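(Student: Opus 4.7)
The plan is to split $\cV^3_{\varepsilon}$ into its affine and its ``at infinity'' parts and count each separately, since the earlier results have essentially done the hard work.

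First, I would write
\[
|\cV^3_\varepsilon| = |\cV^3_\varepsilon \cap \AG(3,q^2)| + |\cV^3_{\varepsilon,\infty}|,
\]
where by Theorem~\ref{thm:cardinality} (applied with $r=3$) the first summand equals $q^{5}$, regardless of the residue of $e$ modulo $4$. So everything reduces to counting the points of $\cV^3_{\varepsilon,\infty}$, which are described by Lemma~\ref{lemma:intersection}.

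In the case $e\equiv 1\pmod 4$, Lemma~\ref{lemma:intersection} tells us that $\cV^3_{\varepsilon,\infty}=\ell_0$ is a single line of $\PG(3,q^2)$, hence has $q^2+1$ points; adding the $q^5$ affine points gives the claimed $q^5+q^2+1$. In the case $e\equiv 3\pmod 4$, we have $\cV^3_{\varepsilon,\infty}=\ell_1\cup\ell_2\cup\ell_3$, so the key observation to pin down is the incidence structure of the three lines. Each $\ell_i$ has equations $X_0=0$ and $X_1+c_iX_2=0$, so the point $P_\infty=(0,0,0,1)$ lies on all three. For $i\neq j$, solving the pair $X_1+c_iX_2=X_1+c_jX_2=0$ with $c_i\neq c_j$ forces $X_1=X_2=0$, so $\ell_i\cap\ell_j=\{P_\infty\}$, and in particular $\ell_1\cap\ell_2\cap\ell_3=\{P_\infty\}$ as well. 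Inclusion--exclusion then yields
\[
|\ell_1\cup\ell_2\cup\ell_3|=3(q^2+1)-3\cdot 1+1=3q^2+1,
\]
and combining with the $q^5$ affine points produces $q^5+3q^2+1$.

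There is no real obstacle: the deep content sits in Theorem~\ref{thm:cardinality} and Lemma~\ref{lemma:intersection}. The only point that must be treated carefully is the verification that the three lines $\ell_1,\ell_2,\ell_3$ are concurrent at the single point $P_\infty$, which ensures the inclusion--exclusion correction is $-3+1$ rather than something larger; once that is confirmed the theorem follows by pure arithmetic.
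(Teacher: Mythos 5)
Your proposal is correct and follows essentially the same route as the paper, which derives Theorem~\ref{th:V3n} directly from Theorem~\ref{thm:cardinality} (giving the $q^5$ affine points) and Lemma~\ref{lemma:intersection} (describing $\cV^3_{\varepsilon,\infty}$). The only difference is that you spell out the concurrency of $\ell_1,\ell_2,\ell_3$ at $P_\infty$ and the resulting inclusion--exclusion count $3(q^2+1)-3+1=3q^2+1$, a detail the paper leaves implicit.
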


\begin{theorem}\label{lemma:weights}
If $e \equiv 3 \pmod 4$, then the weights of the projective code $\cC^3_{\varepsilon}:=\cC(\cV^3_{\varepsilon})$
  belong to the set
\begin{multline*}
\large\{q^5,q^5-q^3+3q^2, q^5-q^3+2q^2, 
q^5-q^3+3q^2+q-2, \\ q^5-q^3+2q^2+q-2 \large\}
.\end{multline*}
\end{theorem}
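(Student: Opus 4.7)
The weights of $\cC^3_\varepsilon$ are the quantities $|\cV^3_\varepsilon|-|\cV^3_\varepsilon\cap\Pi|$ as $\Pi$ ranges over the planes of $\PG(3,q^2)$, and by Theorem~\ref{th:V3n} one has $|\cV^3_\varepsilon|=q^5+3q^2+1$. It therefore suffices to show that the possible values of $|\cV^3_\varepsilon\cap\Pi|$ lie in the set $\{3q^2+1,\ q^3+1,\ q^3+q^2+1,\ q^3-q+3,\ q^3+q^2-q+3\}$. The case $\Pi=\Pi_\infty$ is immediate: by Lemma~\ref{lemma:intersection}, $\cV^3_{\varepsilon,\infty}=\ell_1\cup\ell_2\cup\ell_3$ with the three lines concurrent at $P_\infty=(0,0,0,1)$, so $|\cV^3_\varepsilon\cap\Pi_\infty|=3(q^2+1)-2=3q^2+1$, giving the weight $q^5$.

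For $\Pi\neq\Pi_\infty$, set $m=\Pi\cap\Pi_\infty$. Since $\cV^3_\varepsilon$ and the BT quasi-Hermitian variety $\cH^3_\varepsilon$ have the same affine points, a double counting of the points of $\Pi$ yields
\[
|\cV^3_\varepsilon\cap\Pi|=|\cH^3_\varepsilon\cap\Pi|-|\cH^3_{\varepsilon,\infty}\cap m|+|\cV^3_{\varepsilon,\infty}\cap m|.
\]
Because $\cH^3_\varepsilon$ is quasi-Hermitian, $|\cH^3_\varepsilon\cap\Pi|\in\{q^3+1,q^3+q^2+1\}$, the smaller value occurring when $\Pi$ is secant (so that the section is a unital) and the larger when $\Pi$ is tangent (so that the section is a Hermitian cone). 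Thus the entire problem is reduced to classifying $m$ inside $\Pi_\infty$ and to deciding, for each configuration, whether $\Pi$ can be secant or is forced to be tangent.

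The cone $\cH^3_{\varepsilon,\infty}:X_1^{q+1}+X_2^{q+1}=0$ is the union of the $q+1$ lines $X_1+\alpha X_2=0$ with $\alpha^{q+1}=1$ through $P_\infty$. Since $q=2^e$ with $e$ odd we have $3\mid q+1$, so the three cube roots of unity in $\GF{q^2}$ are among these $\alpha$; hence $\ell_1,\ell_2,\ell_3\subseteq\cH^3_{\varepsilon,\infty}$. Four configurations are then possible for $m$: (a) $m=\ell_i$ for some $i$; (b) $m$ is one of the remaining $q-2$ cone-lines through $P_\infty$; (c) $m$ passes through $P_\infty$ but is not a cone line; (d) $m$ does not pass through $P_\infty$. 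A direct count through the vertex $P_\infty$ gives
\[
\bigl(|\cV^3_{\varepsilon,\infty}\cap m|,\,|\cH^3_{\varepsilon,\infty}\cap m|\bigr)=(q^2+1,q^2+1),\ (1,q^2+1),\ (1,1),\ (3,q+1)
\]
in cases (a), (b), (c), (d) respectively.

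The crucial observation is that in cases (a) and (b) the line $m$ is entirely contained in $\cH^3_{\varepsilon,\infty}\subseteq\cH^3_\varepsilon$; hence $\Pi\cap\cH^3_\varepsilon$ contains the line $m$. Since unitals contain no line, $\Pi$ cannot be secant and so $|\cH^3_\varepsilon\cap\Pi|=q^3+q^2+1$. In cases (c) and (d) both values of $|\cH^3_\varepsilon\cap\Pi|$ may in principle occur. Substituting into the displayed formula yields the intersection sizes $q^3+q^2+1$ in case (a), $q^3+1$ in case (b), $\{q^3+1,q^3+q^2+1\}$ in case (c), and $\{q^3-q+3,q^3+q^2-q+3\}$ in case (d); together with $3q^2+1$ coming from $\Pi=\Pi_\infty$, these exhaust the claimed list, and subtracting from $q^5+3q^2+1$ produces the five stated weights. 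The main obstacle is precisely the step just above: one needs to know that the secant hyperplane sections of $\cH^3_\varepsilon$ are not merely sets of the correct cardinality but are actual line-free unitals, a structural fact about BT quasi-Hermitian varieties that goes beyond the mere intersection-number definition and must be imported from~\cite{A}. A more computational alternative would substitute the equation of $\Pi$ directly into $x_3^q+x_3=\Gamma_\varepsilon(x_1)+\Gamma_\varepsilon(x_2)$ and count affine solutions using the explicit form of $\Gamma_\varepsilon$ and the additivity of the trace, but this route is significantly heavier and obscures the geometric picture.
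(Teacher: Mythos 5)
Your proof reaches the same list of intersection sizes $\{3q^2+1,\ q^3+1,\ q^3+q^2+1,\ q^3-q+3,\ q^3+q^2-q+3\}$ and hence the same weights, but by a genuinely different route for the planes through $P_\infty$. The paper splits the planes by whether $d=0$ and, for $d=0$, computes the affine intersection \emph{directly}: it expands $x,y,z$ over the basis $\{1,\varepsilon\}$, reduces $\Gamma_\varepsilon(x)$ to $x_0^{\sigma+2}+x_0x_1+x_1^{\sigma}$, and counts exactly $q^3$ affine solutions of the resulting system for every such plane; only for $d\neq 0$ does it fall back on the quasi-Hermitian intersection numbers of $\cH^3_\varepsilon$. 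You instead treat \emph{all} planes $\Pi\neq\Pi_\infty$ uniformly through the identity $|\cV^3_\varepsilon\cap\Pi|=|\cH^3_\varepsilon\cap\Pi|-|\cH^3_{\varepsilon,\infty}\cap m|+|\cV^3_{\varepsilon,\infty}\cap m|$ and a classification of $m=\Pi\cap\Pi_\infty$ relative to the cone of $q+1$ lines at infinity. Your approach is cleaner and avoids the affine computation entirely; what it gives up is precision in your case (c), where you cannot exclude the value $q^3+q^2+1$ that the paper's direct count rules out (all $d=0$ planes in fact give $q^3$ affine points). Since the theorem only asserts membership in a set, this loss is harmless here, but it would matter if one wanted the exact weight distribution.

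The one step you should repair is the justification of cases (a) and (b). You deduce $|\cH^3_\varepsilon\cap\Pi|=q^3+q^2+1$ from ``secant sections are line-free unitals,'' a structural fact you propose to import from~\cite{A}; this is stronger than what is needed and is not part of the definition of a quasi-Hermitian variety. The weaker fact you actually need follows from a double count using only the hyperplane intersection numbers: if a line $\ell$ is contained in a quasi-Hermitian variety $\cS$ of $\PG(3,q^2)$, the $q^2+1$ planes through $\ell$ cover the space with overlap $\ell$, so writing $a$ and $b$ for the numbers of planes meeting $\cS$ in $q^3+1$ and $q^3+q^2+1$ points one gets
\begin{equation*}
|\cS|+q^2(q^2+1)=a(q^3+1)+b(q^3+q^2+1)=(q^2+1)(q^3+1)+bq^2,
\end{equation*}
and since $|\cS|=(q^2+1)(q^3+1)$ this forces $b=q^2+1$ and $a=0$. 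Hence every plane containing a line of $\cH^3_\varepsilon$ meets it in $q^3+q^2+1$ points, which closes your argument without any appeal to the unital structure of the sections.
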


{\begin{proof}
We have to study the cardinality of the intersections between $\cV^3_{\varepsilon}$ and a plane $\pi: aX_0+bX_1+cX_2+dX_3=0$ of $\PG(3,q^2)$.
\noindent
If $(b,c,d)=(0,0,0)$ then $\pi=\Pi_{\infty}$ and by \Cref{lemma:intersection}, $|\cV^3_{\varepsilon,\infty}|=3q^2+1$.

Next, we distinguish the following cases:
\begin{enumerate}[label=\alph*)]
        \item\label{tA} $d=0$, $b \neq 0\neq c$ and $\frac{b}{c} \in \{c_1,c_2,c_3\}$, where $c_1,c_2,c_3$ are the $(2^{\frac{e-1}{2}}+1)$-th roots of unity;
         \item\label{tB} $d=0$,  $(b,c)\neq (0,0)$ and  $\frac{b}{c}$  is not a $(2^{\frac{e-1}{2}}+1)$-th root of unity;
        \item\label{tC}  $d\neq 0$.
        \end{enumerate}
If $\pi$ is of type \ref{tA} then  $|\pi\cap \cV^3_{\varepsilon,\infty}|=q^2+1$  whereas if it is of type \ref{tB} then  $\pi\cap \cV^3_{\varepsilon,\infty}$ consists only of the point $P_\infty(0,0,0,1)$.

 In order to determine the number of the affine points of $\cV^3_{\varepsilon} \cap \pi$  in cases \ref{tA} or \ref{tB} we can assume without loss of generality $b=1$.  The case of $b=0$ and $c\neq 0$ follows immediately by swapping the roles of $X_1$ and $X_2$. 
So we need to compute the number of solutions for the following system in unknowns  $(x,y,z) \in \GF{q^2}^3$
    \begin{equation}\label{eq:system}
    \begin{cases}
z^q+z=\Gamma_{\varepsilon}(x)+\Gamma_{\varepsilon}(y)\\
x+cy+a=0.
    \end{cases}
    \end{equation}
    In order to solve the system above, let consider the $\GF{q}$-basis $\{1,\varepsilon\}$ of $\GF{q^2}$ introduced in Section~\ref{subsec:QHvar}.
    Then,
    \[
x=x_0+\varepsilon x_1\;\; y=y_0+\varepsilon y_1\;\; z=z_0+\varepsilon z_1\;\;
a=a_0+\varepsilon a_1\;\; c=c_0+\varepsilon c_1.
    \]
with $x_i,y_i,z_i,a_i,c_i \in \GF{q}$, $i=1,2$.
Note that $x+x^q=x_1$, \begin{equation}
\begin{split}
\Gamma_\varepsilon(x)&=[(x_0+\varepsilon x_1)  + x_1 \varepsilon ]^{\sigma+2}+x_1^{\sigma}+x_1^2 \varepsilon + (x_0+x_1 \varepsilon)^{q+1}+x_0^2+\varepsilon^2 x_1^2\\
&=x_0^{\sigma+2}+x_1^\sigma+x_1^2 \varepsilon +x_0^2 + x_0x_1 +\varepsilon^{q+1}x_1^2+x_0^2+x_0^2+\varepsilon^2 x_1^2\\
&=x_0^{\sigma+2}+x_1^\sigma+x_0x_1+x_1^2(\varepsilon^2+\varepsilon+\delta)\\
&=x_0^{\sigma+2}+x_0x_1+x_1^\sigma.
\end{split}
\end{equation}
and, similarly, $\Gamma_{\varepsilon}(y)=y_0^{\sigma+2}+y_0y_1+y_1^\sigma.$\\
Substituting the expressions obtained above in System \eqref{eq:system}, we obtain
   \begin{equation}\label{eq:system2}
    \begin{cases}
z_1=x_0^{\sigma+2}+x_0x_1+x_1^\sigma+y_0^{\sigma+2}+y_0y_1+y_1^\sigma\\
x_0+\varepsilon x_1=(c_0y_0+\delta c_1y_1+a_0)+\varepsilon( c_1y_0+c_0y_1+c_1y_1+a_1)
    \end{cases}
    \end{equation}
By the second equation in \eqref{eq:system2}, we derive
\begin{equation}\label{eq:equalities}
x_0=c_0y_0+\delta c_1y_1+a_0 \text{ and } x_1=c_1y_0+c_0y_1+c_1y_1+a_1.
\end{equation}
We see that the value of $x$
is uniquely determined by $y$.
Substituting \eqref{eq:equalities} in the first equation of
\eqref{eq:system}, we see that $z_1$ is also uniquely determined
by $y$,
while $z_0$ ranges in $\GF{q}$.
Since $y$ ranges in $\GF{q^2}$, we get that system
\eqref{eq:system} has $q^3$ solutions; thus
the cardinality of $\cV^3_{\varepsilon} \cap\pi$ is respectively $q^3+q^2+1$ in case \ref{tA} or $q^3+1$ in case \ref{tB}.

We now study
 $|\pi\cap \cV^3_{\varepsilon}|$ in case \ref{tC}.
 Recall that $|\pi\cap \cV^3_{\varepsilon}\cap \AG(3,q^2)|=|\pi\cap \cH^3_{\varepsilon}\cap \AG(3,q^2)|$.\\
 Since $\cH^3_{\varepsilon}$ is a quasi Hermitian variety,
 the cardinality of the intersection between $\cH^3_{\varepsilon}$ and a generic plane in $\PG(3,q^2)$ is either $q^3+1$ or $q^3+q^2+1$; so, we have
$$\{q^3+1,q^3+q^2+1\} \ni |\pi\cap \cH^3_{\varepsilon}|=|\pi\cap \cH^3_{\varepsilon}\cap \AG(3,q^2)|+|\pi\cap \cH^3_{\varepsilon}\cap \Pi_{\infty}|.$$
Recalling that the set of infinity points in $\cH^3_{\varepsilon} $ is given by $q+1$ lines passing through $P_{\infty}(0,0,0,1)$ and $\pi$ does not contain $P_{\infty}$, we get
$$|\pi\cap \cV^3_{\varepsilon}\cap \AG(3,q^2)|=|\pi\cap \cH^3_{\varepsilon}\cap \AG(3,q^2)| \in \{ q^3-q,q^3+q^2-q\}.$$
So, if $\pi$ is of type \ref{tC}, $|\pi\cap \cV^3_{\varepsilon}| \in \{q^3-q+3,q^3+q^2-q+3\}$.
Finally, by  \eqref{formula:n}, the weights of $\cC^3_{\varepsilon}$ with respect to planes of $\PG(3,q^2)$ are as in the statement, concluding the proof.
\end{proof}
}

{\begin{theorem}\label{lemma:weights2}
If $e \equiv 1 \pmod 4 $, then the weights of $\cC^3_{\varepsilon}=\cC(\cV^3_{\varepsilon})$ belong to the set
\[
\large\{q^5,q^5-q^3, q^5-q^3+q^2, 
q^5-q^3+q,q^5-q^3+q^2+q
\large\}.
\]
\end{theorem}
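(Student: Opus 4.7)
The plan is to imitate the structure of the proof of Theorem~\ref{lemma:weights}, adapting only the intersection count at infinity to the present setting. By Lemma~\ref{lemma:intersection}, for $e\equiv1\pmod 4$ the variety at infinity $\cV^3_{\varepsilon,\infty}$ consists of the single line $\ell_0:X_0=X_1+X_2=0$, of $q^2+1$ points; and by Theorem~\ref{th:V3n} the length of $\cC^3_\varepsilon$ is $q^5+q^2+1$. The weight of the codeword attached to a plane $\pi:aX_0+bX_1+cX_2+dX_3=0$ is $(q^5+q^2+1)-|\pi\cap\cV^3_\varepsilon|$, so the problem reduces to enumerating the possible values of $|\pi\cap\cV^3_\varepsilon|$. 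I split the planes into four types: (i) $\pi=\Pi_\infty$; (ii) $d=0$ with $b=c$ (equivalently, $\pi\supseteq\ell_0$); (iii) $d=0$ with $b\ne c$ (including $b=0$ or $c=0$); (iv) $d\ne 0$. Case (i) immediately gives $|\pi\cap\cV^3_\varepsilon|=q^2+1$ and contributes weight $q^5$.

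For (ii) and (iii) the affine count is obtained verbatim from the proof of Theorem~\ref{lemma:weights}: after normalizing $b=1$ and writing coordinates over the $\GF{q}$-basis $\{1,\varepsilon\}$, the linear equation of $\pi$ expresses $x$ uniquely in terms of $y$, the defining equation of $\cV^3_\varepsilon$ then fixes $z_1$ uniquely in terms of $y$, while $z_0$ ranges freely over $\GF{q}$. This argument is insensitive to $e\bmod 4$ and yields $q^3$ affine points in both cases. What changes is the intersection at infinity. In case (ii), $\pi\supseteq\ell_0$, so $|\pi\cap\cV^3_{\varepsilon,\infty}|=q^2+1$, producing $|\pi\cap\cV^3_\varepsilon|=q^3+q^2+1$ and weight $q^5-q^3$. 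In case (iii), the line $\pi\cap\Pi_\infty$ is distinct from $\ell_0$ (since $(b,c)$ is not proportional to $(1,1)$), hence meets $\ell_0$ in exactly one point, yielding $|\pi\cap\cV^3_\varepsilon|=q^3+1$ and weight $q^5-q^3+q^2$.

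Case (iv) follows the same philosophy used in Theorem~\ref{lemma:weights}: one exploits the equality $\pi\cap\cV^3_\varepsilon\cap\AG(3,q^2)=\pi\cap\cH^3_\varepsilon\cap\AG(3,q^2)$ together with the fact that $\cH^3_\varepsilon$ is quasi-Hermitian, so $|\pi\cap\cH^3_\varepsilon|\in\{q^3+1,q^3+q^2+1\}$. Since $d\ne 0$, the plane $\pi$ does not contain $P_\infty=(0,0,0,1)$, so the line $\pi\cap\Pi_\infty$ misses the vertex of the Hermitian cone $\cH^3_{\varepsilon,\infty}:X_0=X_1^{q+1}+X_2^{q+1}=0$ and consequently meets each of its $q+1$ rulings in a single non-vertex point; thus $|\pi\cap\cH^3_{\varepsilon,\infty}|=q+1$ and $|\pi\cap\cH^3_\varepsilon\cap\AG(3,q^2)|\in\{q^3-q,q^3+q^2-q\}$. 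Adding back the single point of $\pi\cap\Pi_\infty\cap\ell_0$ (which exists because $\pi\cap\Pi_\infty$ is a line distinct from $\ell_0$) gives $|\pi\cap\cV^3_\varepsilon|\in\{q^3-q+1,q^3+q^2-q+1\}$, producing the remaining weights $q^5-q^3+q^2+q$ and $q^5-q^3+q$. Collecting all five values yields the claimed list. The only genuinely new ingredient compared with the previous theorem is the redescription of $\cV^3_{\varepsilon,\infty}$ as the single line $\ell_0$; no step in the argument presents a substantive obstacle, since the affine computation and the quasi-Hermitian reduction are insensitive to the congruence class of $e$ modulo $4$.
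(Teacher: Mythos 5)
Your proposal is correct and follows essentially the same route as the paper's proof: the same four-way split of planes (the plane at infinity; $d=0$ with $b=c$; $d=0$ with $b\neq c$; $d\neq 0$), the same reuse of the affine count $q^3$ from the system in Theorem~\ref{lemma:weights}, and the same quasi-Hermitian reduction with the $q+1$ rulings of the cone at infinity for the $d\neq 0$ case. The only cosmetic difference is that the paper identifies the single point of $\pi\cap\cV^3_{\varepsilon,\infty}$ explicitly as $P_\infty$ in the $d=0$, $b\neq c$ case, whereas you derive it as the unique intersection of two distinct lines of $\Pi_\infty$; both are valid.
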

\begin{proof}
    Since $\cV^3_{\varepsilon,\infty}:=\Pi_{\infty}\cap \cV^3_{\varepsilon}:X_1+X_2=X_0=0$, the weight for the plane at infinity is  $|\cV^3_{\varepsilon}|-q^2-1$.
    We split the set of affine projective planes 
 with equations $aX_0+bX_1+cX_2+dX_3=0$, $(b,c,d) \in \GF{q^2}^3 \setminus \{(0,0,0)\}$ of $\PG(3,q^2)$ in three subsets:
    \begin{enumerate}[label=\alph*)]
        \item\label{tAA}  $d=0$ and $b=c$, then $\pi: aX_0+X_1+X_2=0$ and  $|\pi\cap \cV^3_{\varepsilon,\infty}|=q^2+1$; 
        \item\label{tBB} $d=0$ and $b\neq c$, then $\pi\cap \cV^3_{\varepsilon,\infty}=P_{\infty}(0,0,0,1)$;
        \item\label{tCC} $d\neq 0$, then $\pi\cap \cV^3_{\varepsilon,\infty}=Q$ where $Q\neq P_{\infty}$.
    \end{enumerate}
    Now we compute the cardinalities of the intersections between $\cV^3_{\varepsilon}$ and the planes $\pi$ mentioned above. For the plane of type \ref{tAA} the cardinality is $q^3+q^2+1$ and, consequently the weight is $|\cV^3_{\varepsilon}|-q^3-q^2-1$; for the planes of type \ref{tBB} the
    cardinality is $q^3+1$ and the weight is $|\cV^3_{\varepsilon}|-q^3-1$.
    This is obtained studying the number of solutions of  System~\eqref{eq:system}  in the proof of \Cref{lemma:weights}.
Finally, we tackle the study of $|\pi\cap \cV^3_{\varepsilon}|$ for a plane of type~\ref{tCC}. Then, since $\cV^3_{\varepsilon}\cap \AG(3,q^2)=\cH^3_{\varepsilon} \cap \AG(3,q^2)$ and $|\pi \cap \cH^3_{\varepsilon}|\in \{q^3+1,q^3+q^2+1\}$, we have that
\begin{equation}
\begin{split}
    |\pi  \cap \cV^3_{\varepsilon}|&=  |\pi  \cap \cV^3_{\varepsilon} \cap  \AG(3,q^2)|+   |\pi  \cap \cV^3_{\varepsilon,\infty}|\\
    &=
    |\pi  \cap \cH^3_{\varepsilon} \cap  \AG(3,q^2)|+1=   |\pi  \cap \cH^3_{\varepsilon}|-|\pi  \cap \cH^3_{\varepsilon} \cap \Pi_{\infty}|+1\\
    &= |\pi  \cap \cH^3_{\varepsilon}|-q \in \{q^3-q+1,q^3+q^2-q+1\}.
    \end{split}
\end{equation}
So, the weight of a plane of type \ref{tCC}) is either  $|\cV^3_{\varepsilon}|-(q^3+q^2-q+1)$ or $|\cV^3_{\varepsilon}|-(q^3-q+1)$. This concludes the proof.
\end{proof}}

\begin{theorem}\label{thm:minimality}
    The code $\cC^3_{\varepsilon}=\cC(\cV^3_{\varepsilon})$ is a minimal linear code if and  only if $e\equiv 3 \pmod 4$.
\end{theorem}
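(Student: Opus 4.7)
The plan is to prove the biconditional by combining the cutting-set characterization of minimal codes (Theorem \ref{thm:min_code}) with the intersection data already gathered in Lemma \ref{lemma:intersection} and the weight enumerations in Theorems \ref{lemma:weights} and \ref{lemma:weights2}. Since $e$ is odd, the two cases $e \equiv 1$ and $e \equiv 3 \pmod 4$ exhaust the possibilities, and I would handle them by complementary arguments: the Ashikhmin--Barg sufficient condition \eqref{suff} for one direction, and the exhibition of an explicit non-cutting plane for the other.

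For the implication $e \equiv 3 \pmod 4 \Rightarrow \cC^3_\varepsilon$ minimal, I would apply the Ashikhmin--Barg bound to the weight list in Theorem \ref{lemma:weights}. The key point is that the code is defined over $\GF{q^2}$, so the relevant threshold in \eqref{suff} is $(q^2-1)/q^2$ rather than $(q-1)/q$. From the list one reads off $w_{\max} = q^5$ and $w_{\min} = q^5-q^3+2q^2$, and verifying $w_{\min}/w_{\max} > 1 - q^{-2}$ reduces to a straightforward polynomial inequality in $q$ (the gap is of order $q^{-3}$). Invoking \eqref{suff} then delivers minimality at once, bypassing any case-by-case check of the cutting property on individual planes.

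For the converse I would argue contrapositively: assuming $e \equiv 1 \pmod 4$, I exhibit a single plane that is not cut by $\cV^3_\varepsilon$ and then appeal to Theorem \ref{thm:min_code}. The natural candidate is $\Pi_\infty$ itself. Lemma \ref{lemma:intersection} asserts that for $e \equiv 1 \pmod 4$ the intersection $\cV^3_{\varepsilon,\infty}$ is the single line $\ell_0$, which certainly does not span the plane $\Pi_\infty$; hence $\langle\Pi_\infty \cap \cV^3_\varepsilon\rangle = \ell_0 \subsetneq \Pi_\infty$ and minimality fails. The main subtlety, more than a genuine obstacle, is keeping the ground field straight when invoking Ashikhmin--Barg: indeed, in the $e \equiv 1 \pmod 4$ case one has $w_{\min}/w_{\max} = (q^5-q^3)/q^5 = 1 - q^{-2}$, which equals the threshold $(q^2-1)/q^2$ \emph{exactly}. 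This borderline behaviour is a clean sign that the dichotomy in the theorem is sharp and that the geometric obstruction at $\Pi_\infty$ is the true reason minimality breaks down in the second case.
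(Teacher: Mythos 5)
Your proposal is correct and follows essentially the same route as the paper: the $e\equiv 3\pmod 4$ direction via Ashikhmin--Barg over $\GF{q^2}$ with $w_{\min}/w_{\max}=(q^5-q^3+2q^2)/q^5>(q^2-1)/q^2$, and the $e\equiv 1\pmod 4$ direction by observing from Lemma~\ref{lemma:intersection} that $\Pi_\infty\cap\cV^3_\varepsilon$ is a single line, so $\Pi_\infty$ is not cut and Theorem~\ref{thm:min_code} applies. Your remark that the ratio hits the threshold exactly in the $e\equiv 1$ case is a nice sanity check not made explicit in the paper, but the argument itself is identical.
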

\begin{proof}
    If $e\equiv 3 \pmod 4$, then, by \Cref{lemma:weights}, the ratio between the minimum and maximum weights of $\cC^3_{\varepsilon}$ is
   \[
   \frac{| \cV^3_{\varepsilon}|-q^3-q^2-1}{| \cV^3_{\varepsilon}|-3q^2-1}=\frac{q^5-q^3+2q^2}{q^5}>\frac{q^2-1}{q^2};
\]
this proves the minimality by Condition~\eqref{suff}.
 If $e\equiv 1 \pmod 4$, then   $\langle \Pi_{\infty}\cap \cV^3_{\varepsilon} \rangle \neq \Pi_{\infty}$  and hence,
from \Cref{thm:min_code}, the code  $\cC^3_\varepsilon$ is not minimal. 
\end{proof}

\begin{corollary}\label{coro:dist}
  The code $\cC_{\varepsilon}^3=\cC(\cV^3_{\varepsilon})$ is a $[\vert \cV_\varepsilon^3 \vert, 4]$-linear code with minimum distance  $d=\vert \cV_\varepsilon^3 \vert- q^3-q^2-1$. Moreover, its $m$-th higher minimum
  weights $d_m$ with $m \in \{1,2,3\}$ are:
\[  d_1=|\cV^3_{\varepsilon}|-q^3-q^2-1;\;\; d_2=|\cV^3_{\varepsilon}|-q^2-1;\;\;  d_3=|\cV^3_{\varepsilon}|-1.
\]
\end{corollary}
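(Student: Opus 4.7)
The plan is to apply the projective-systems formula for the $k$-th generalized Hamming weight,
\[
d_k(\cC)=|\Omega|-\max\{|\Omega\cap \Pi|:\operatorname{codim}(\Pi)=k\},
\]
recalled in Section~\ref{prelim}, to $\Omega=\cV^3_{\varepsilon}\subset\PG(3,q^2)$ for $k\in\{1,2,3\}$, after first pinning down length and dimension. The length is $n=|\cV^3_\varepsilon|$ directly from the definition of the induced code, so what is really needed is to identify the maximal intersection of $\cV^3_\varepsilon$ with a plane, a line, and a point.

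To justify that the ambient dimension is $r=4$, I would show that $\cV^3_\varepsilon$ spans $\PG(3,q^2)$: were it contained in some plane $\pi$, then Theorems~\ref{lemma:weights} and~\ref{lemma:weights2} would bound $|\pi\cap\cV^3_\varepsilon|$ by $q^3+q^2+1$, whereas $|\cV^3_\varepsilon|\geq q^5+q^2+1$ by Theorem~\ref{th:V3n}; the inequality $q^5>q^3$ for $q\geq 2$ yields a contradiction. Hence $\langle\cV^3_\varepsilon\rangle=\PG(3,q^2)$ and $r=4$.

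For $d_1$, the maximum plane intersection has already been computed in Theorems~\ref{lemma:weights}--\ref{lemma:weights2}: in both residue classes of $e$ modulo $4$ the largest plane section of $\cV^3_\varepsilon$ has cardinality $q^3+q^2+1$, attained on planes cutting $\cV^3_{\varepsilon,\infty}$ along a full line. Subtracting from the length gives the stated $d_1$. For $d_2$, Lemma~\ref{lemma:intersection} guarantees that $\cV^3_{\varepsilon,\infty}$ already contains at least one projective line (namely $\ell_0$ when $e\equiv 1\pmod 4$, or any of $\ell_1,\ell_2,\ell_3$ when $e\equiv 3\pmod 4$); since every line of $\PG(3,q^2)$ consists of exactly $q^2+1$ points, this is the absolute maximum possible line intersection, and it is realized, yielding $d_2=|\cV^3_\varepsilon|-(q^2+1)$. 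Finally, for $d_3$, codimension-$3$ subspaces of $\PG(3,q^2)$ are single points; as $\cV^3_\varepsilon$ is non-empty, the maximum point intersection is $1$, so $d_3=|\cV^3_\varepsilon|-1$. There is no real obstacle: the only non-trivial ingredient, namely the plane intersection bound for $d_1$, is precisely what Theorems~\ref{lemma:weights} and~\ref{lemma:weights2} established, and the values of $d_2$ and $d_3$ follow from a dimension count together with the existence of a line inside $\cV^3_{\varepsilon,\infty}$.
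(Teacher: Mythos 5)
Your proof is correct and follows the route the paper intends: the corollary is stated without proof there, as an immediate consequence of the generalized-Hamming-weight formula $d_k(\cC)=|\Omega|-\max\{|\Omega\cap\Pi|:\operatorname{codim}(\Pi)=k\}$ combined with Theorem~\ref{th:V3n}, Lemma~\ref{lemma:intersection} and Theorems~\ref{lemma:weights}--\ref{lemma:weights2}. Your additional checks (that $\cV^3_\varepsilon$ spans $\PG(3,q^2)$, that the maximal plane section $q^3+q^2+1$ exceeds the section $3q^2+1$ of $\Pi_\infty$ since $q\geq 8$, and that a full line of $\cV^3_{\varepsilon,\infty}$ realizes the codimension-$2$ maximum) are exactly the details needed to make the deduction rigorous.
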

\begin{remark}
We stress that in \cite{violating}, the authors provided a general method to transform a minimal code satisfying the Ashikhmin–Barg condition into a minimal code that violates it.
\end{remark}

\subsection{The variety $\cV_{\varepsilon}^r$, $r\geq 4$}

In this section we are going to  determine the parameters of  the linear code $\cC^r_{\varepsilon}=\cC(\cV_{\varepsilon}^r)$ generated by the projective points of $\cV^r_{\varepsilon}$ in $\PG(r,q^2)$, $r\geq 4$  showing that it is a minimal code for any odd integer $e$.

\begin{lemma}\label{min}
  Let $r \geq 2$ and $q=2^e$ with $e \geq 3$ odd.
  Then  the Fermat hypersurface $\cF_n^r$ of degree $n=2^{\frac{e-1}{2}}+1$ in $\PG(r,q^2)$ defined by the equation
  $$\cF^r_{n}: X_0^{2^{\frac{e-1}{2}}+1}+X_1^{2^{\frac{e-1}{2}}+1}+\ldots+X_{r}^{2^{\frac{e-1}{2}}+1}=0$$
  spans $\PG(r,q^2)$, that is
  $\langle \cF^r_{n} \rangle =\PG(r,q^2)$.
When $e=1$ the hypersurface $\cF_n^r$ is a hyperplane in $\PG(r,4)$.
\end{lemma}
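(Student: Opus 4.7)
The plan is to decompose $\langle \cF^r_n \rangle$ into two contributions: the span of a collection of easily identified points on $\cF^r_n$, which will already cover the hyperplane $H:\sum_{i=0}^r X_i = 0$, together with one extra point on $\cF^r_n$ lying outside $H$. Since $H$ is a hyperplane, any point off $H$ together with $H$ generates all of $\PG(r,q^2)$, and this will close the argument.

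For the first step, I exploit characteristic~$2$: each of the $\binom{r+1}{2}$ points $P_{ij} := e_i + e_j$ (with $e_k$ the $k$-th standard basis point and $0 \leq i<j\leq r$) satisfies $1^n+1^n=0$, so $P_{ij}\in \cF^r_n$. Among these, $P_{01}, P_{12}, \ldots, P_{r-1,r}$ are linearly independent over $\GF{q^2}$ and all lie in $H$, so they form a basis of $H$; consequently $H \subseteq \langle \cF^r_n \rangle$.

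The crux is then to exhibit a single point $P \in \cF^r_n$ with $\sum_i P_i \neq 0$. I look inside the plane $X_3=\cdots=X_r=0$ and search for $a,b\in\GF{q^2}$ satisfying $a^n+b^n=1$ and $b\neq 1+a$; the point $P=(a,b,1,0,\ldots,0)$ will then do the job. Here the argument splits according to $d:=\gcd(n,q^2-1)$, which by the gcd computation carried out in the proof of Lemma~\ref{lemma:intersection} equals $3$ when $e\equiv 3\pmod 4$ and $1$ when $e\equiv 1\pmod 4$. If $d=3$, then $3\mid n$ and any primitive cube root of unity $\omega\in\GF{q^2}$ satisfies $\omega^n=1$; choosing $a=0$, $b=\omega$ gives $a^n+b^n=1$ and $a+b+1 = \omega+1\neq 0$. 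If $d=1$, the map $x\mapsto x^n$ is a bijection of $\GF{q^2}$, so $b$ is uniquely determined by $a$; the forbidden equality $b=1+a$ amounts to $(1+a)^n=1+a^n$, and the polynomial $(1+X)^n+1+X^n\in\GF{2}[X]$ has linear coefficient $\binom{n}{1}=n\equiv 1\pmod 2$, so it is nonzero of degree at most $n-1<q^2$, and all but at most $n-1$ values of $a\in\GF{q^2}$ produce a valid pair.

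The main obstacle is precisely this case split, since the two divisibility regimes demand different constructions: primitive roots of unity do the job cheaply when $3\mid n$, whereas in the coprime regime the argument hinges on the nontriviality of $(1+X)^n-(1+X^n)$ in $\GF{2}[X]$, which in turn relies on $n=2^{(e-1)/2}+1$ not being a power of $2$ — true exactly because $e\geq 3$. This also transparently accounts for the distinct behavior when $e=1$: there $n=2$ and the Frobenius identity $(X_0+\cdots+X_r)^2=X_0^2+\cdots+X_r^2$ collapses $\cF^r_n$ into the hyperplane $H$ itself, confirming the final sentence of the statement.
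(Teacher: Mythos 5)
Your proof is correct, and it takes a genuinely different route from the paper's. The paper argues by induction on $r$: the base case $r=2$ uses the Hasse--Weil bound to show the Fermat curve has more than $n$ rational points, hence cannot be contained in a line; the inductive step slices with the hyperplane $X_r=0$ and adjoins the point $(1,0,\ldots,0,1)$. You instead exhibit explicit spanning points: the characteristic-$2$ points $e_i+e_j$ give a basis of the hyperplane $H:\sum X_i=0$ inside $\langle \cF^r_n\rangle$, and a single further point $(a,b,1,0,\ldots,0)$ off $H$ finishes the job, with the existence of $(a,b)$ handled by the cube-root-of-unity trick when $\gcd(n,q^2-1)=3$ and by counting roots of the nonzero polynomial $(1+X)^n+1+X^n$ (odd linear coefficient, degree $\le n-1<q^2$) when the gcd is $1$. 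Your argument is more elementary --- it needs no Hasse--Weil input and no induction, works uniformly in $r\ge 2$, and transparently explains the degenerate case $e=1$ via $(\sum X_i)^2=\sum X_i^2$ --- whereas the paper's approach has the side benefit of producing a point-count estimate for the curve in the base case. All the individual steps check out: the vectors $e_0+e_1,\ldots,e_{r-1}+e_r$ are indeed independent and span the $r$-dimensional subspace underlying $H$; $3\mid n$ and $\omega+1\neq 0$ in the $e\equiv 3\pmod 4$ case; and $n$ odd guarantees the linear coefficient $\binom{n}{1}$ survives reduction mod $2$ in the $e\equiv 1\pmod 4$ case.
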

\begin{proof}
  If $e=1$, then $\cF^r_n$ is a degenerate quadric in characteristic $2$,
  splitting in a hyperplane counted twice.
  Assume now $e\geq2$ and set $n=2^{\frac{e-1}{2}}+1$. 
  The proof is by induction on $r$. Suppose $r=2$.
  Since $\cF^2_{n}$ is non-singular and absolutely irreducible, the Hasse-Weil bound assures that the number $N_{q^2}:=N_{q^2}(\cF^r_{n})$ of its $\GF{q^2}$-rational points is at least
$q^2+1-(n-1)(n-2)q$.
Since each line of the projective plane meets $\cF^2_{n}$ in at most $n$ points and $N_{q^2}>n$ then the rational points on $\cF^2_{n}$ generate the plane.
Suppose now that the points of $\cF^{r-1}_{n}$ generate $\PG(r-1,q^2)$ and consider $\cF^{r}_{n} \cap \Pi$, where $\Pi$ is the hyperplane with equation $X_r=0$ of $\PG(r,q^2)$. Then $\cF^r_{n} \cap \Pi= \cF^{r-1}_{n}$; so, by induction, $\cF^{r-1}_{n}$  generates the hyperplane $\Pi$.
On the other hand, the point $M(1,0,\ldots,0,1)$ belongs to $\cF^r_n \setminus \Pi$ and hence
$$ \PG(r,q^2)= \langle M, \Pi \rangle = \langle M, \cF^{r-1}_{n} \rangle \subseteq \langle \cF_{n}^r \rangle.   $$
\end{proof}

\begin{theorem}\label{th:inter_hyp} For $r\geq4$ and $e\geq3$ odd, the set of rational points $\cV^r_{\varepsilon} \subset \PG(r,q^2)$ is cutting, i.e. for any hyperplane of $\PG(r,q^2)$,
\begin{equation}\label{span}
\langle \Pi \cap \cV^r_{\varepsilon} \rangle = \Pi.
\end{equation}
\end{theorem}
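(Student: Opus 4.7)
The plan is to split the argument according to whether $\Pi$ coincides with the hyperplane at infinity $\Pi_\infty$. Throughout I use freely that $\cV^r_\varepsilon$ and the quasi-Hermitian variety $\cH^r_\varepsilon$ share the same affine points, that $\sigma+2 = 2n$ with $n = 2^{(e-1)/2}+1$, and that, by extracting the leading form of the defining polynomial and reducing modulo $X^{q^2} = X$ over $\GF{q^2}$ exactly as in Lemma~\ref{lemma:intersection}, the set $\cV^r_{\varepsilon,\infty} := \cV^r_\varepsilon \cap \Pi_\infty$ is the cone with vertex $P_\infty = (0,\dots,0,1)$ over the Fermat hypersurface $\cF^{r-2}_n : X_1^n + \dots + X_{r-1}^n = 0$ sitting in the $(r-2)$-dimensional subspace $\{X_0 = X_r = 0\}$.

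If $\Pi = \Pi_\infty$, then $\Pi \cap \cV^r_\varepsilon = \cV^r_{\varepsilon,\infty}$. The hypothesis $r \geq 4$ gives $r - 2 \geq 2$, so Lemma~\ref{min} applies and $\cF^{r-2}_n$ spans its ambient $\PG(r-2, q^2)$; adjoining the vertex $P_\infty$, which lies outside that subspace, we obtain a spanning set of $\Pi_\infty$.

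If instead $\Pi \neq \Pi_\infty$, I would argue by contradiction. Suppose $\Lambda := \langle \Pi \cap \cV^r_\varepsilon \rangle$ has $\dim \Lambda \leq r-2$. If $\Lambda \subseteq \Pi_\infty$, then $\Pi$ contains no affine point of $\cV^r_\varepsilon$, contradicting the observation recorded in Subsection~\ref{subsec:QHvar} that every hyperplane different from $\Pi_\infty$ meets $\cV^r_\varepsilon$ affinely. Otherwise $\Lambda \cap \AG(r,q^2)$ is an affine subspace of dimension at most $r-2$, so $|\Lambda \cap \AG(r,q^2)| \leq q^{2(r-2)}$. On the other hand, writing
\[
|\Pi \cap \cV^r_\varepsilon \cap \AG(r,q^2)| = |\Pi \cap \cH^r_\varepsilon| - |\Pi \cap \cH^r_{\varepsilon,\infty}|,
\]
the first summand, being the intersection of the quasi-Hermitian variety $\cH^r_\varepsilon$ with a hyperplane, equals $|\cH(r-1, q^2)|$ or $|\cH(r-1, q^2)| + (-1)^{r-1} q^{r-1}$, which is of order $q^{2r-3}$; the second, being a hyperplane section of the Hermitian cone $\cH^r_{\varepsilon,\infty}$, can be bounded by $O(q^{2r-5})$ by splitting into the subcases $P_\infty \in \Pi$ (giving a smaller cone over a hyperplane section of $\cH(r-2, q^2)$) and $P_\infty \notin \Pi$ (giving a copy of $\cH(r-2, q^2)$) and invoking the known intersection numbers of $\cH(r-2, q^2)$. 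This forces the affine intersection to exceed $q^{2r-4}$ for every $q \geq 2$, yielding the desired contradiction.

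The main obstacle is verifying the last numerical inequality uniformly, especially in the worst scenario where $\Pi$ is tangent to $\cH^r_\varepsilon$ and $\Pi \cap \Pi_\infty$ passes through $P_\infty$ while cutting the Hermitian base $\cH(r-2, q^2)$ tangentially; even there, the leading terms of the Hermitian counting formulas leave a gap of order $q^{2r-3}$ against $q^{2r-4}$, so the bound holds with roughly a factor of $q$ to spare.
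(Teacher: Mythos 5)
Your proposal follows essentially the same route as the paper: the hyperplane at infinity is handled via the cone-over-Fermat structure together with Lemma~\ref{min}, and every other hyperplane via a counting argument that computes the affine part of $\Pi\cap\cV^r_\varepsilon$ as a quasi-Hermitian hyperplane section minus a section of the Hermitian cone at infinity, splitting into the same cases ($P_\infty\in\Pi$ or not, $\Pi$ tangent or not) and comparing with the size of an $(r-2)$-dimensional subspace. The only differences are cosmetic: the paper compares against $\theta_{q^2}(r-2)$ and carries out the resulting inequalities explicitly in all four cases, whereas you compare against $q^{2(r-2)}$ (after disposing of the sub-case $\langle\Pi\cap\cV^r_\varepsilon\rangle\subseteq\Pi_\infty$ via the blocking property) and leave the numerical verification as a sketch, which does indeed hold with the margin you indicate.
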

\begin{proof}
  Firstly, we prove that
  $ \langle \cV^r_{\varepsilon} \cap \Pi_{\infty}\rangle=\Pi_{\infty}$.
  Put $\cV^r_{\varepsilon, \infty}=\cV^r_{\varepsilon} \cap \Pi_{\infty}$.
  By Equation~\eqref{eq:V}, $\cV^r_{\varepsilon,\infty}$ has equations 
$$X_0=X_1^{2^{\frac{e-1}{2}}+1}+X_2^{2^{\frac{e-1}{2}}+1}+\ldots+X_{r-1}^{2^{\frac{e-1}{2}}+1}=0.$$
Then, this variety is a cone through the point $P_{\infty}(0,0,\ldots,1)$ and a Fermat hypersurface $\cF^{r-2}_{n}$, $n=2^{\frac{e-1}{2}}+1$, contained in the $(r-2)$-dimensional subspace $\Pi_{r-2}: X_0=X_r=0$.
By Lemma \ref{min}, this variety spans  $\Pi_{r-2}$ for $e\geq3$. So, since  $P_{\infty}$ belongs to   $\mathcal V^{r}_{\varepsilon,\infty}\setminus \Pi_{r-2}$, we have
\[ \langle \cV^r_{\varepsilon,\infty} \rangle =\langle P_{\infty},  \mathcal V^r_{\varepsilon, \infty} \cap \Pi_{r-2}\rangle = \langle P_{\infty}, \Pi_{r-2} \rangle =\Pi_{\infty}. \]
In order to prove \eqref{span}, for any other hyperplane $\Pi$ distinct from $\Pi_{\infty}$, it is enough to note that $$\vert \cV^r_{\varepsilon} \cap \AG(r,q^2) \cap \Pi \vert = \vert \cH^r_{\varepsilon} \cap \AG(r,q^2) \cap \Pi \vert,$$
and to show that  the size of the set $\cH^r_{\varepsilon} \cap \AG(r,q^2) \cap \Pi$ is strictly greater than $\vert \PG(r-2,q^2) \vert = \theta_{q^2}(r-2)$ points. 

We will use the following property, see \cite{A}.
Let $\cH^r:=\cH(r,q^2)$ be the Hermitian variety of equation $\cH^r: X_r^{q}+X_r=X_1^{q+1}+\ldots +X_{r-1}^{q}$ and $\Pi$ be a hyperplane through the point $P_{\infty}$.
Then 
\begin{equation}
\label{hPP}
\cH^r \cap \Pi \cap \Pi_{\infty} = \cH_{\varepsilon}^r \cap  \Pi  \cap \Pi_{\infty}.
\end{equation}
We split the discussion in four cases:

\begin{enumerate}[label=\roman*)]
\item  $P_{\infty} \not \in \Pi$ and $\vert \cH^r_{\varepsilon} \cap \Pi \vert =\vert \cH(r-1,q^2) \vert$ see \cite{A}.
In this case  $\cH^r_{\varepsilon} \cap \Pi_{\infty} \cap \Pi$ is  an $\cH(r-2,q^2)$, otherwise $P_\infty$ would belong to $\Pi$ against the hypothesis, cf. \eqref{eq:Hinf}. Hence,
\begin{equation}\label{eq:span}
\begin{split}
 \vert \cH^r_{\varepsilon} \cap \AG(r,q^2) \cap \Pi \vert&  = \vert \cH(r-1, q^2) \vert -\vert  \cH(r-2, q^2) \vert \\
 &=q^{r-2}(q^{r-1}-(-1)^{r-1}).
 \end{split}
 \end{equation}

Let us consider
\begin{equation}\label{case1}
\begin{split}
(q^2-1) & \left (\vert \cH^r_{\varepsilon} \cap \AG(r,q^2) \cap \Pi \vert - \theta_{q^2}(r-2) \right)\\
&=(q^2-1)q^{r-2}(q^{r-1}-(-1)^{r-1})-(q^{2(r-1)}-1)\\
&=\begin{cases}
  (q^{r-1}+1)(q^r-q^{r-2}-q^{r-1}+1)  &\, r \textnormal{ even }\\
   (q^{r-1}-1)(q^r-q^{r-2}-q^{r-1}-1) &\, r \textnormal{ odd }.
\end{cases}
\end{split}
\end{equation}
In order to get the claim, it is enough to show that $q^{r}-q^{r-2}-q^{r-1}-1$ is positive for any $q \geq 2$ and $r \geq 4$. \\
Since $q^{r-\ell} \leq \frac{q^{r}}{2^\ell}$ for any $q \geq 2$, $r \geq 4$ and $\ell \in \{0, \ldots, r\}$ and by \eqref{case1}, we have
\[
q^r-q^{r-2}-q^{r-1}-1 \geq \frac{q^r}{4}-1> 0.
\]

\item $P_{\infty} \not \in \Pi$ and $\vert \Pi \cap \cH^r_{\varepsilon} \vert =\vert  \Pi_0 \cH(r-2,q^2) \vert$. Since $P_{\infty}\notin \Pi$, we have that $\cH^r_{\varepsilon}\cap \Pi_{\infty}\cap \Pi$ is again an $\cH(r-2,q^2)$. We proceed as in point i):
\begin{equation}
\begin{split}
\vert \cH^r_{\varepsilon} \cap \AG(r,q^2) \cap \Pi \vert&=
|\Pi_0\cH(r-2,q^2)|-|\cH(r-2,q^2)|=\\
&=(-1)^{r-1}q^{r-1}+\frac{(q^{r-1}-(-1)^{r-1})(q^r-q^{r-2})}{(q^2-1)}\\
&=(-1)^{r-1}q^{r-1}+q^{r-2}(q^{r-1}-(-1)^{r-1}).
\end{split}
\end{equation}
Studying the case for $r$ even, we obtain
\[
\vert \cH^r_{\varepsilon} \cap \AG(r,q^2) \cap \Pi \vert=
q^{2r-3}-q^{r-1}+q^{r-2}=q^{r-2}(q^{r-1}-q+1)
\]
and hence,
\begin{multline*}
(q^2-1)\left ( \vert \cH^r_{\varepsilon} \cap \AG(r,q^2) \cap \Pi \vert - \theta_{q^2}(r-2) \right )=\\
(q^2-1)q^{r-2}(q^{r-1}-q+1)-(q^{2(r-1)}-1).
\end{multline*}
Then, we have that 
\begin{equation}
\begin{split}
q^{2r-1}&+q^r-q^{r+1}-q^{2r-3}-q^{r-2}+q^{r-1}-q^{2(r-1)}+1\\
&>q^{r-2}(q^{r+1}+q^2-q^3-q^{r-1}-1+q-q^r)\\
&>q^{r-2}(q^{r+1}+q^2-q^3-q^{r-1}-q^r)\\
&>q^{r-2}(q^{r-1}(q^2-q-1)+q^2-q^3)\\
&\geq q^{r}(q^{r-3}+1-q)>0,
\end{split}
\end{equation}
for any $r \geq 4$ and $q \geq 2$. Then,
\[
\vert \cH^r_{\varepsilon} \cap \AG(r,q^2) \cap \Pi \vert > \theta_{q^2}(r-2).
\]
Similarly in the case of $r$ odd we have that
\[
\vert \cH^r_{\varepsilon} \cap \AG(r,q^2) \cap \Pi \vert=
q^{r-1}+q^{2r-3}-q^{r-2}
\]
and therefore 
\begin{equation}
\begin{split}
(q^2-1)&\left(\vert \cH^r_{\varepsilon} \cap \AG(r,q^2) \cap \Pi \vert - \theta_{q^2}(r-2) \right)\\
&>q^{r-2}(q^{r+1}-q^r-q^{r-1}+q^{3}-q^{2}-q+1)\\
&>q^{r-2}(q^{r-1}(q^2-q-1)+q(q^2-q-1)+1)\\
&>q^{r-1}(q^{r-2}+1)(q^2-q-1)>0
\end{split}
\end{equation}
for any $q\geq 2$.

\item $P_{\infty}  \in \Pi$ and $\vert \Pi \cap \cH^r_{\varepsilon} \vert =\vert \Pi_0 \cH(r-2,q^2) \vert$.
%Observe that $\Pi\cap\Pi_{\infty}\cap\cH_{\varepsilon}^r$ is
%either a $\Pi_0\cH(r-3,q^2)$ or a $\Pi_1\cH(r-%4,q^2)$.

 We know that $|\cH^r\cap\Pi|=|\cH_{\varepsilon}^r\cap\Pi|$ and in particular the hyperplane $\Pi$ is tangent to $\cH^r$ at some point $Q$ different
from $P_{\infty}$. By~\eqref{hPP}, $\cH^r\cap\Pi\cap\Pi_{\infty}=\cH_{\varepsilon}^r\cap\Pi\cap\Pi_{\infty}$ hence, it follows that 
$\cH_{\varepsilon}^r\cap\Pi\cap\Pi_{\infty}=\cH^r\cap P_{\infty}^{\perp}\cap Q^{\perp}$ with $P_{\infty}\in Q^{\perp}$; so
$|\cH_{\varepsilon}^r\cap\Pi_{\infty}\cap\Pi|=|\Pi_1\cH(r-4,q^2)|$.

%One can easily check that if $r$ is even %$|\Pi_0\cH(r-3,q^2)|>|\Pi_1\cH(r-%4,q^2)|$ while if $r$ is odd $|\Pi_0\cH(r-%3,q^2)|<|\Pi_1\cH(r-4,q^2)|$.

%Therefore, in order to prove that $\langle \cV^r_{\varepsilon}\cap \Pi \rangle=\Pi$ in the case of $r$ even, it is enough to show
%\[
%\vert \Pi_0\cH(r-2,q^2)\vert -\vert %\Pi_0\cH(r-3,q^2) \vert >\theta_{q^2}(r-2).
%\]
%Making explicit the calculation, we have
%\begin{equation}
%\begin{split}
%&\vert \Pi_0\cH(r-2,q^2)\vert -\vert %\Pi_0\cH(r-3,q^2) \vert\\
%&=\frac{(q^{r}-1)(q^{r-1}+1)}{q^2-1}
%-q^{r-1}-\frac{(q^{r-1}+1)(q^{r-2}-1)}{q^2-1}-q^{r-2}\\
%&=q^{2r-3}-q^{r-1}.
%\end{split}
%\end{equation}
%Then,
%\begin{multline*}
%(q^2-1)(\vert\cH^r_{\varepsilon}\cap %\AG(r,q^2)\cap \Pi \vert-\theta_{q^2}(r-2))\geq\\
%(q^2-1)(q^{2r-3}-q^{r-1})-q^{2(r-1)}+1,
%\end{multline*}
%and 
%\begin{equation}
%\begin{split}
%q^{2r-1}-&q^{r+1}-q^{2r-3}+q^{r-1}-q^{2(r-1)}+1\\
%&=q^{2r-3}(q^{2}-q-1)-q^{r+1}+q^{r-1}+1\\
%&\geq q^{2r-3}-q^{r+1}+q^{r-1}+1\\
%&>q^{2r-3}-q^{r+1}=q^{r+1}(q^{r-4}-1) \geq 0
%\end{split}
%\end{equation}
%since  $r>3$ and $q^2-q-1\geq 1$ for $q\geq 2$.

Therefore, in order to prove that $\langle \cV^r_{\varepsilon}\cap \Pi \rangle=\Pi$, it is enough to show
\[
\vert \Pi_0\cH(r-2,q^2)\vert -\vert \Pi_1\cH(r-4,q^2) \vert >\theta_{q^2}(r-2).
\]
In case of $r$ odd we have 
%\[
 %\vert \Pi_0\cH(r-2,q^2)\vert -\vert \Pi_1\cH(r-4,q^2) \vert >\theta_{q^2}(r-2).
%\]

We observe that 
\begin{multline*}
\vert \Pi_0\cH(r-2,q^2)\vert -\vert \Pi_1\cH(r-4,q^2) \vert=\\
\frac{(q^r+(-1)^{r-1})(q^{r-1}-(-1)^{r-1}}{q^2-1}+
\\
(-1)^{r-1}q^{r-1}-\frac{(q^{r-3}-(-1)^{r-4})(q^{r-4}-(-1)^{r-4})q^4}{q^2-1}-q^2-1
\end{multline*}
which is equal to $q^{2r-3}$ for $r$ even as well as for $r$ odd.

Now,
\begin{multline*}
(q^2-1)(\vert \Pi_0\cH(r-2,q^2)\vert -\vert \Pi_1\cH(r-4,q^2) \vert- \theta_{q^2}(r-2))\\
=q^{2r-1}-q^{2r-3}-q^{2r-2}+1
=q^{2r-3}(q^2-q-1)+1
\end{multline*}
where the last term is clearly positive for $q\geq 2$.

\item $P_{\infty}  \in \Pi$, $\vert \Pi \cap \cH^r_{\varepsilon} \vert =\vert \cH(r-1,q^2) \vert$.

In this case $ \vert\Pi \cap \Pi_{\infty} \cap \cH^r_{\varepsilon}\vert  =\vert \Pi_0\cH(r-3,q^2)\vert$.
Then,
\begin{multline}
\vert\cH^r_{\varepsilon}\cap \AG(r,q^2)\cap \Pi \vert=\\
\vert \cH(r-1,q^2)\vert -\vert \Pi_0\cH(r-3,q^2) \vert=\frac{(q^{r}+(-1)^{r-1})(q^{r-1}-(-1)^{r-1})}{q^2-1}\\
-\frac{(q^{r-1}+(-1)^{r-2})(q^{r-2}-(-1)^{r-2})}{q^2-1}-(-1)^{r-2}q^{r-2}
\end{multline}
Both for $r$ even and odd, we get
\[\vert\cH^r_{\varepsilon}\cap \AG(r,q^2)\cap \Pi \vert=
\vert \cH(r-1,q^2)\vert -\vert \Pi_0\cH(r-3,q^2) \vert=q^{2r-3}
.\]
 Thus, we can argue as in the case above.
%\begin{multline}
%(q^2-1)(\vert\cH^r_{\varepsilon}\cap %\AG(r,q^2)\cap S \vert-\theta_{q^2}(r-2) )\\
%=(q^2-1)q^{2r-3}-q^{2(r-1)}+1\\
%>(q^2-q-1)q^{2r-3}+1,
%\end{multline}
%and this is strictly greater than zero.
\end{enumerate}
\end{proof}

\begin{theorem} \label{th:cone-over-Fermat}
    Let $\cV^r_{\varepsilon} \subset \PG(r,q^2)$, $ r \geq 4 $. Then,
    $$\vert \cV^r_{\varepsilon} \vert = q^{2r-1}+\vert \cF^{r-2}_{n}\vert q^2 +1,$$
where $n=2^{\frac{e-1}{2}}+1$.
\end{theorem}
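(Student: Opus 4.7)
The plan is to decompose $\cV^r_{\varepsilon}$ into the disjoint union of its affine part and its part at infinity $\cV^r_{\varepsilon,\infty}:=\cV^r_{\varepsilon}\cap \Pi_{\infty}$, and to count the two contributions separately. By Theorem~\ref{thm:cardinality}, the affine part accounts for exactly $q^{2r-1}$ points, so the whole task reduces to determining $|\cV^r_{\varepsilon,\infty}|$.

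For the variety at infinity I would invoke the description already recorded inside the proof of Theorem~\ref{th:inter_hyp}: homogenizing the affine equation~\eqref{eq:V} and then setting $X_0=0$, only the top-degree part of each $\Gamma_{\varepsilon}(X_i)$ survives; after extracting the $q$-th and $2$-nd roots (both permissible in characteristic $2$) one is left with
\[ X_0 = 0,\qquad X_1^{n} + X_2^{n} + \cdots + X_{r-1}^{n} = 0,\qquad n = 2^{(e-1)/2}+1. \]
Since the coordinate $X_r$ does not appear in this equation, $\cV^r_{\varepsilon,\infty}$ is the projective cone with vertex $P_{\infty}=(0,\ldots,0,1)$ over the Fermat hypersurface $\cF^{r-2}_{n}$ lying in the complementary subspace $\Pi_{r-2}:X_0 = X_r = 0$; in particular $P_{\infty}\notin \Pi_{r-2}$.

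The last step is a routine cone count. Any non-vertex point of such a cone lies on a unique generator through $P_{\infty}$ which meets $\Pi_{r-2}$ in exactly one point of $\cF^{r-2}_{n}$; conversely, each base point determines a generator contributing $q^2$ non-vertex points (the $q^2+1$ points of the line minus $P_{\infty}$). Therefore
\[ |\cV^r_{\varepsilon,\infty}| = 1 + q^2\,|\cF^{r-2}_{n}|, \]
and combining with the $q^{2r-1}$ affine points gives the statement. There is no real obstacle: the identification of $\cV^r_{\varepsilon,\infty}$ as a cone over $\cF^{r-2}_{n}$ has already been carried out in the proof of Theorem~\ref{th:inter_hyp}, so the entire argument reduces to a bookkeeping of the lines through the vertex.
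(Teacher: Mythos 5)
Your proposal is correct and follows essentially the same route as the paper: the paper's proof likewise combines Theorem~\ref{thm:cardinality} for the $q^{2r-1}$ affine points with the observation (already made in the proof of Theorem~\ref{th:inter_hyp}) that $\cV^r_{\varepsilon}\cap\Pi_{\infty}$ is the cone with vertex $P_{\infty}$ over $\cF^{r-2}_{n}$ in $X_0=X_r=0$, and then counts the cone as $1+q^2\vert\cF^{r-2}_{n}\vert$. You merely spell out the homogenization and the line-through-the-vertex bookkeeping that the paper leaves implicit.
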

\begin{proof}
    The result follows observing that $\cV^r_{\varepsilon} \cap \Pi_{\infty}$ is a cone with vertex the point $P_{\infty}(0,0,\ldots,1)$ over a Fermat hypersurface of the $(r-2)$-dimensional subspace with equations $X_0=X_r=0$. 
\end{proof}

By \Cref{th:cone-over-Fermat}, in order to get the length and the weights of the projective linear code $\cC_{\varepsilon}^r$, $r \geq 4 $, it is necessary to determine  the size of the Fermat hypersurface $\cF_{n}^{r-2}$, $r \geq 4$ and $n= 2^\frac{e-1}{2}+1$, lying in the $(r-2)$-dimensional subspace of $\PG(r,q^2)$ of equations $X_0=X_r=0$. Thus, we prove the following.  
\begin{proposition}\label{thm:cardinalityinf}
The number $N_{q^2}$ of $\GF{q^2}$-rational points of $\cF^{r}_{n}$ in $\PG(r,q^2)$, $r\geq 2$,  satisfies the following properties:
\begin{enumerate}[label=\upshape{(\roman*)}]
\item if $e\equiv 1 \pmod 4$ then $N_{q^2}=\theta_{q^2}(r-1)$;
\item if $e\equiv 3 \pmod 4$  then $$N_{q^2}\leq (n-1)q^{2(r-1)}+nq^{2(r-2)}+\theta_{q^{2}}(r-3).$$
\item if $e=3$ and $r=2$ then  $N_{q^2}=(q+1)^2.$
\end{enumerate}
\end{proposition}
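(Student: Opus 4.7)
The plan is to treat the three cases separately, using a tailored approach in each.

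For part (i), I would exploit the coprimality $\gcd(n, q^2-1) = 1$ already established in the proof of Lemma \ref{lemma:intersection} under the hypothesis $e \equiv 1 \pmod 4$. This coprimality implies that $x \mapsto x^n$ is a bijection on $\GF{q^2}$, so that the induced map $\phi : [X_0 : \cdots : X_r] \mapsto [X_0^n : \cdots : X_r^n]$ is a well-defined bijection of $\PG(r,q^2)$ (scaling by $\lambda \in \GF{q^2}^*$ is preserved because $\lambda^n \neq 0$, and the inverse is given by the map $y\mapsto y^{n^{-1}\bmod(q^2-1)}$ applied coordinatewise). This bijection sends $\cF^r_n$ onto the hyperplane $Y_0 + Y_1 + \cdots + Y_r = 0$, and counting the points of a hyperplane yields $N_{q^2} = \theta_{q^2}(r-1)$.

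For part (ii), I would apply Theorem \ref{RationalVar1} with $\nu = n$; the only issue is verifying that $\cF^r_n$ has no $\GF{q^2}$-linear component, and I would prove the stronger statement that $\cF^r_n$ is absolutely irreducible. Since $n = 2^{(e-1)/2} + 1$ is odd in characteristic $2$, each partial derivative of $\sum_i X_i^n$ equals $X_i^{n-1}$, whose common zero locus is just the origin, which is not a projective point. Hence $\cF^r_n$ is smooth over the algebraic closure. If $\sum_i X_i^n = g \cdot h$ were a nontrivial factorization, then $V(g) \cap V(h)$ would be non-empty (by B\'ezout when $r = 2$, and by a codimension count when $r \geq 3$), and the product rule would force any point of this intersection to be singular on $\cF^r_n$, contradicting smoothness. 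Absolute irreducibility follows, so Theorem \ref{RationalVar1} delivers the claimed upper bound.

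For part (iii), in the special case $e = 3$, $r = 2$ the curve $\cF^2_3 : X_0^3 + X_1^3 + X_2^3 = 0$ lives over $\GF{q^2} = \GF{64}$, and its equation is literally that of the Hermitian curve $\cH(2,4)$ defined over $\GF{4}$, viewed after base change through $\GF{64} = \GF{4^3}$. Over $\GF{4}$ this curve is maximal, with $N_1 = 2^3 + 1 = 9$ points, which forces both Frobenius eigenvalues to equal $-2$. The Weil formula for point counts of a curve over finite extensions then yields
\[
N_{q^2} \;=\; 4^3 + 1 - 2\cdot(-2)^3 \;=\; 81 \;=\; (q+1)^2.
\]

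The main obstacle is the absolute irreducibility step in part (ii), where the passage from pointwise smoothness to non-factorizability combines the product rule for derivatives with a B\'ezout/codimension argument forcing distinct components to meet. Parts (i) and (iii) are essentially elementary once the correct identification is made: the first is a bijection of point sets afforded by coprimality, and the third is a textbook application of the Weil formula to a well-known maximal curve.
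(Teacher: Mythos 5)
Your proposal is correct, and in two of the three parts it takes a genuinely different (and in one place more complete) route than the paper. For (i), the paper argues by induction on $r$: it counts the points of $\cF^2_n$ directly by cases, then adds $q^{2(r-1)}$ affine points at each inductive step; your observation that $\gcd(n,q^2-1)=1$ makes the coordinatewise $n$-th power map a bijection of $\PG(r,q^2)$ carrying $\cF^r_n$ onto a hyperplane gives the count in one stroke and is cleaner, while resting on exactly the same coprimality fact. For (ii), the paper simply invokes Theorem~\ref{RationalVar1} without checking its hypothesis; your verification that $\cF^r_n$ has no linear component (indeed is absolutely irreducible, via the Jacobian criterion --- the partials are $X_i^{n-1}$ since $n$ is odd --- combined with the fact that two components of a hypersurface in $\PG(r,q^2)$, $r\geq 2$, must meet in a singular point) supplies a detail the paper leaves implicit, and is a welcome addition. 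For (iii), both arguments ultimately use maximality of the degree-$3$ curve: the paper cites \cite{HKT} for maximality over $\GF{64}$ and applies Hasse--Weil, whereas you identify the curve as the Hermitian curve $\cH(2,4)$, read off the Frobenius eigenvalues $-2$ from its maximality over $\GF 4$, and propagate to $\GF{4^3}$ via the Weil formula; the computations agree ($81=(q+1)^2$). In short, your part (i) buys a shorter, non-inductive proof, your part (ii) buys rigor the paper omits, and your part (iii) is an equivalent derivation from a more elementary starting point.
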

\begin{proof}
\begin{enumerate}[label=(\roman*)]
\item
  If $e=1$, then $\cF_{n}^r$ is a hyperplane in $\PG(r,q^2)$ and the
  statement holds. Suppose now $e\geq3$.
The proof is by induction on $r$. For $r=2$,    $\mathcal F^2_{n}$ is the  plane curve of equation $X_0^n+X_1^n+X_2^n=0$.
Then, the points of  $\cF^2_{n}$ 
are of the following forms:
\begin{enumerate}[label=(\alph*)]
\item\label{xA} $(0, 1, x_2)$, $x_2^n = 1$;
\item\label{xB} $ (1, x_1, 0)$, $ x_1^n = 1$;
\item\label{xC} $(1, x_1, x_2)$, $ x_1^n = \alpha \neq 1$ and $x_2^n=1+\alpha\neq 0$, where $\alpha \in \GF{q^2}$.
\end{enumerate}
 Since $\gcd(q^2-1,n)=\gcd(2^{2e}-1,2^{\frac{e-1}{2}}+1)=1$ for  $e \equiv 1 \pmod 4$,  then any $m\in\GF{q^2}\setminus\{0\}$ is an $n$-th power of an element in $\GF{q^2}$. Furthermore, there is
precisely  one point of type \ref{xA} and one point of type \ref{xB}.
As for type \ref{xC}, $x_1^n=1$ implies $x_1=1$; so
there are exactly $q^2-1$ possible second coordinates
for any point of type \ref{xC}. Given any such second coordinate, the third coordinate $x_2$ is uniquely determined. So, there are precisely
$q^2-1$ points of type \ref{xC}. So,  $\cF^2_{n}$ has $q^2+1$ rational points.
% (it is indeed a rational curve).
Now, by the induction hypothesis, let suppose that the points of $\cF^{r-1}_{n}$ are 
$\theta_{q^2}(r-2)=q^{2(r-2)}+q^{2(r-3)}+\ldots+q^2+1$ in total and consider the hypersurface   $\cF^{r}_{n}:\ X_0^n+X_2^n+\ldots+X_{r}^n=0$ of $\PG(r,q^2)$.
The points of $\cF^{r}_{n}$  for which $x_{r}=0$ are  $\theta_{q^2}(r-2)$ by induction, whereas the points of our hypersurface for which $x_{r}=1$ are $q^{2(r-1)}$ in total; so the result follows.
\item This is a consequence of \Cref{RationalVar1} applied to the hypersurface $\cF^{r}_{n}$.
\item  In this case, the plane  curve $\cF^2_{3}$ turns out to be maximal by Theorem \cite[Theorem 10.65]{HKT}. So the result follows from the
  Hasse-Weil bound.
\end{enumerate}
\end{proof}

\begin{theorem}\label{th:code4}
The linear code $\cC^r_{\varepsilon}$ generated by the projective points of $\cV^r_{\varepsilon}$ in $\PG(r,q^2)$, $r\geq 4$ is a $(r+1)$-dimensional minimal code for any odd integer $e$ with length $\vert \cV_{\varepsilon}^r \vert$.
\end{theorem}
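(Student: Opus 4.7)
The statement collects three claims about $\cC^r_\varepsilon$ for $r \geq 4$ and $e$ odd: length $|\cV^r_\varepsilon|$, dimension $r+1$, and minimality. My plan is to reduce each of them to results already proven in the excerpt, so the argument will be essentially a bookkeeping proof rather than a computational one.

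First I would dispose of the length: by definition, $\cC^r_\varepsilon = \cC(\cV^r_\varepsilon)$ is the projective code whose generator matrix has one column per point of $\cV^r_\varepsilon$, so its length is exactly $|\cV^r_\varepsilon|$, which is the quantity computed in Theorem \ref{th:cone-over-Fermat}. No further work is required here.

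Next, for the dimension, I would argue that $\langle \cV^r_\varepsilon \rangle = \PG(r,q^2)$, which is precisely the condition making $\cC(\cV^r_\varepsilon)$ an $(r+1)$-dimensional code. This is immediate from Theorem \ref{th:inter_hyp}: if $\cV^r_\varepsilon$ failed to span $\PG(r,q^2)$, its span would be contained in some hyperplane $\Pi$, and then $\langle \Pi \cap \cV^r_\varepsilon \rangle = \langle \cV^r_\varepsilon \rangle \subsetneq \Pi$, contradicting the cutting property. Alternatively, one can cite directly the first step of the proof of Theorem \ref{th:inter_hyp}, where it is shown that already $\langle \cV^r_{\varepsilon,\infty}\rangle = \Pi_\infty$; since $\cV^r_\varepsilon$ also contains affine points (indeed $q^{2r-1}$ of them by Theorem \ref{thm:cardinality}), the full span is $\PG(r,q^2)$.

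Finally, for minimality, I would apply Theorem \ref{thm:min_code}: a projective code $\cC(\Omega)$ with $\langle \Omega \rangle = \PG(r,q^2)$ is minimal if and only if $\Omega$ is cutting, i.e., every hyperplane $\Pi$ satisfies $\langle \Pi \cap \Omega \rangle = \Pi$. Both hypotheses have just been verified: the spanning condition from the previous paragraph, and the cutting condition from Theorem \ref{th:inter_hyp} (which handled all four cases depending on whether $P_\infty \in \Pi$ and on the tangent/non-tangent type of the intersection $\Pi \cap \cH^r_\varepsilon$). Consequently $\cC^r_\varepsilon$ is minimal for every odd $e \geq 3$, completing the proof. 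There is essentially no obstacle, since the heavy lifting—the case analysis showing that $|\Pi \cap \cH^r_\varepsilon \cap \AG(r,q^2)| > \theta_{q^2}(r-2)$ in every geometric configuration—has already been carried out in Theorem \ref{th:inter_hyp}; the only subtlety to flag is that the case $e=1$ is excluded in Lemma \ref{min} (where the Fermat hypersurface degenerates), which is consistent with the hypothesis $e \geq 3$ required implicitly for the spanning argument at infinity.
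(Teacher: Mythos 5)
Your proposal is correct and follows essentially the same route as the paper, whose proof is the one-line observation that the claim follows from Theorem~\ref{thm:min_code} together with Theorem~\ref{th:inter_hyp}; you merely make explicit the bookkeeping for the length and the dimension (the latter via the spanning consequence of the cutting property), which the paper leaves implicit. Your remark that the argument really requires $e\geq 3$ (consistent with Lemma~\ref{min} and the standing hypothesis $e>1$ odd) is a fair and accurate observation.
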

\begin{proof} 
The thesis follows from \Cref{thm:min_code} and \Cref{th:inter_hyp}.
\end{proof}

Now, consider the multiset
 $\overline{\cV_{\varepsilon}^r}$ in $\PG(r,q^2)$, $r\geq 3$, that consists of the points in $(\cV_{\varepsilon}^r \setminus \Pi_{\infty})$ 
  plus the point $P_{\infty}$, which is assigned multiplicity  $j\geq 1$. Using an approach similar to that used in \cite{AK} we  get the following result.
 \begin{theorem}\label{thm:codeVbar}
    The linear code associated with  $\overline{\cV_{\varepsilon}^r}$ is a $[q^{2r-1}+j, r+1]_{q^2}$ code with
 weights: \[q^{2r-1}, q^{2r-1}-q^{2r-3},q^{2r-1}-q^{2r-3}+(-1)^{r-1}q^{r-2}+j,\]
 \[ q^{2r-1}-q^{2r-3}+(-1)^{r-1}q^{r-2}-(-1)^{r-1}q^{r-1}+j.\] Furthermore, it  is a   $3$-weight code in the following cases
 \begin{enumerate}[label=(\roman*)]
\item  $ r$ odd and  $j = q^{r-1}-q^{r-2}$ or $j=q^{2r-3}-q^{r-2}$ or $j=q^{2r-3}+q^{r-1}-q^{r-2}$;
\item  $r$ even  and $j=q^{r-2}$, or $j=q^{2r-3}+q^{r-2}$ or $j=q^{2r-3}-q^{r-1}+q^{r-2}$. 
\end{enumerate}
 \end{theorem}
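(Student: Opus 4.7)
The plan is to read off the length and dimension directly from what is already established and then enumerate the weights by a case analysis on the hyperplane $\Pi$, reusing the intersection counts already obtained in the proof of Theorem~\ref{th:inter_hyp}. For the length, by Theorem~\ref{thm:cardinality} the affine part $\cV_{\varepsilon}^r\setminus\Pi_{\infty}$ contributes $q^{2r-1}$ distinct points and $P_{\infty}$ contributes multiplicity $j$, so $|\overline{\cV_{\varepsilon}^r}|=q^{2r-1}+j$. For the dimension, the affine locus of $\cV_{\varepsilon}^r$ coincides with that of $\cH_{\varepsilon}^r$ and has $q^{2r-1}$ points, which exceeds $\theta_{q^2}(r-2)$; hence these points cannot all lie in a single $\PG(r-1,q^2)$, and once $P_{\infty}$ is adjoined the multiset has full projective rank $r+1$.

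For the weights, given any hyperplane $\Pi$ the associated codeword has weight $|\overline{\cV_{\varepsilon}^r}|-|\Pi\cap\overline{\cV_{\varepsilon}^r}|$, where the intersection counts $P_{\infty}$ with multiplicity $j$ when $P_{\infty}\in\Pi$ and with multiplicity $0$ otherwise. I would split $\Pi$ into four mutually exclusive types matching the cases already analyzed in the proof of Theorem~\ref{th:inter_hyp}. Type (a) is $\Pi=\Pi_{\infty}$, where the intersection reduces to $P_{\infty}$ with multiplicity $j$ and the weight is $q^{2r-1}$. Type (b) is $\Pi\neq\Pi_{\infty}$ with $P_{\infty}\in\Pi$; both subcases (iii) and (iv) of Theorem~\ref{th:inter_hyp} give affine intersection exactly $q^{2r-3}$, so the weight is $q^{2r-1}-q^{2r-3}$. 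Type (c) is $P_{\infty}\notin\Pi$ with $|\cH_{\varepsilon}^r\cap\Pi|=|\cH(r-1,q^2)|$ (case (i) there), yielding affine count $q^{2r-3}-(-1)^{r-1}q^{r-2}$ and weight $q^{2r-1}-q^{2r-3}+(-1)^{r-1}q^{r-2}+j$. Type (d) is $P_{\infty}\notin\Pi$ with a singular section (case (ii)), giving affine count $q^{2r-3}-(-1)^{r-1}q^{r-2}+(-1)^{r-1}q^{r-1}$ and weight $q^{2r-1}-q^{2r-3}+(-1)^{r-1}q^{r-2}-(-1)^{r-1}q^{r-1}+j$. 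This recovers exactly the four weights in the statement.

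To obtain the three-weight classification I would then enforce pairwise coincidences among these four values, under the constraint $j\geq 1$. Observe that (a) and (b) are always distinct since $q^{2r-3}\neq 0$, and (c) and (d) are always distinct since equating them forces $q^{r-1}=0$. The remaining four pairings determine $j$ uniquely: (a)=(c) gives $j=q^{2r-3}-(-1)^{r-1}q^{r-2}$; (a)=(d) gives $j=q^{2r-3}-(-1)^{r-1}q^{r-2}+(-1)^{r-1}q^{r-1}$; (b)=(c) gives $j=(-1)^{r}q^{r-2}$; (b)=(d) gives $j=(-1)^{r-1}(q^{r-1}-q^{r-2})$. Discarding the two solutions that become non-positive, namely (b)=(c) when $r$ is odd and (b)=(d) when $r$ is even, leaves exactly the three admissible values of $j$ listed in items (i) and (ii) of the statement.

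The main technical subtlety is to verify that each of the four hyperplane types is actually realized in $\PG(r,q^2)$, so that every listed weight is genuinely attained; otherwise the weight distribution could silently collapse into fewer values and distort the three-weight analysis. Types (a) and (b) are immediate, and for types (c) and (d) the existence follows from the standard fact that a quasi-Hermitian variety admits both non-singular and singular hyperplane sections avoiding any prescribed point. With this realization check in hand, the four weights are genuinely distinct generically and the three-weight collapses occur precisely at the six computed values of $j$, three of which survive the positivity constraint in each parity class of $r$.
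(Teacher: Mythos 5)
Your proposal is correct and follows essentially the same route as the paper, whose proof is just a pointer to the intersection counts in Theorems~\ref{lemma:weights}, \ref{lemma:weights2} and~\ref{th:inter_hyp}: you partition hyperplanes by position relative to $\Pi_{\infty}$ and $P_{\infty}$, read off the four affine intersection sizes, and solve the pairwise coincidences in $j$ with the positivity constraint. The only cosmetic slip is in the dimension argument, where the relevant comparison is $q^{2r-1}>\theta_{q^2}(r-1)$ (points of a hyperplane), not $\theta_{q^2}(r-2)$, but the inequality holds either way.
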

 \begin{proof}
It follows from  the proofs of \Cref{lemma:weights}, \Cref{lemma:weights2} and
 \Cref{th:inter_hyp}.
\end{proof}

%\footnote{For $r$ odd, if $j=q^{2r-3}+q^{r-1}-q^{r-2}$ the weights become 
%$$\{q^{2r-1},q^{2r-1}-q^{2r-3},q^{2r-1}+q^{r-1}\}$$}
\section{The cutting gap}
\label{cutgap}

Minimal projective linear codes and cutting blocking sets with respect to the hyperplanes are essentially the same kind of object, see~\cite{BB} and \cite{TQLZ}. In~\cite{DGMP},  the notion of $t$-fold strong blocking set $\Lambda$ is introduced, as a set of points in a projective space $\PG(n,q)$ such that every $(t-1)$-dimensional projective subspace is spanned by $t$ points in $\Lambda$. Clearly, being a cutting blocking set with respect to the hyperplanes and being a $n$-fold strong blocking set mean the same.

In this section we introduce
the concept of \emph{cutting gap}, as a measure of the amount in which
a set $\Lambda$ fails to be a $t$-fold strong blocking set for some $t$.
It provides a quantitative measure of how effectively a point set intersects $k$-dimensional subspaces in a projective space. This notion is closely related to the geometry of higher Hamming weights and has applications in coding theory and incidence geometry, as first explored in~\cite{DGMP}. In particular, cutting gaps allow us to distinguish between point sets that merely intersect subspaces and those that span them, offering a refined tool for analyzing the structure of algebraic varieties.

Throughout this section, we use projective dimensions and adopt the convention $\dim(\emptyset) = -1$.

\begin{definition}
Let $\Omega$ be a non-empty set of points in $\PG(r,\KK)$, and let $0 \leq k \leq r$. The \emph{$k$-th cutting gap} of $\Omega$ is defined as
\[
\tau_k(\Omega) = k - \min\left\{ \dim\left(\langle \Pi \cap \Omega \rangle \right) \colon \dim(\Pi) = k \right\}.
\]
\end{definition}
According to~\cite{DGMP}, $\Omega$ is a \emph{$(k+1)$-fold strong blocking set} if and only if $\tau_k(\Omega) = 0$. In this paper, we refer to this property as being \emph{$k$-cutting}, and we say that $\Omega$ is a \emph{$k$-cutting set} if $\tau_k(\Omega) = 0$. For explicit constructions of $k$-cutting sets for selected values of $k$, see~\cite[\S 3]{DGMP}.

Note that $\Omega$ is a classical \emph{cutting set with respect to hyperplanes} precisely when $\tau_{r-1}(\Omega) = 0$. More generally, $\Omega$ is $k$-cutting if every $k$-dimensional subspace is spanned by its intersection with $\Omega$.

\begin{theorem}
Let $\Omega$ be a $k$-cutting set in $\PG(r,\KK)$. Then $\Omega$ is also $\ell$-cutting for all $k \leq \ell \leq r$.
\end{theorem}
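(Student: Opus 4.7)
The plan is to proceed by a direct argument, reducing the $\ell$-dimensional case to the assumed $k$-dimensional cutting property by exploiting the fact that any subspace is covered by its $k$-dimensional subspaces through any given point.

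More precisely, fix $\ell$ with $k \leq \ell \leq r$ and let $\Pi$ be an arbitrary $\ell$-dimensional subspace of $\PG(r,\KK)$. I want to show $\langle \Pi \cap \Omega\rangle = \Pi$; the inclusion $\langle \Pi\cap\Omega\rangle \subseteq \Pi$ is automatic, so only the reverse inclusion needs work. To prove $\Pi \subseteq \langle \Pi\cap\Omega\rangle$ it suffices to show that every point $P\in\Pi$ lies in $\langle \Pi\cap\Omega\rangle$. For such a $P$, choose any $k$-dimensional subspace $\Sigma\subseteq\Pi$ with $P\in\Sigma$ (this is possible because $\ell\geq k$, so one can extend $P$ to a basis of $\Pi$ and take the span of $P$ together with $k$ of the remaining basis points). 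By the hypothesis that $\Omega$ is $k$-cutting, $\langle \Sigma\cap\Omega\rangle = \Sigma$, and in particular $P\in\langle\Sigma\cap\Omega\rangle$. Since $\Sigma\cap\Omega\subseteq\Pi\cap\Omega$, we conclude $P\in\langle\Pi\cap\Omega\rangle$, as required.

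I do not expect any genuine obstacle here: the argument is purely incidence-geometric and does not depend on the field $\KK$ being finite or infinite. The only subtlety worth spelling out explicitly is the existence, inside an $\ell$-dimensional projective subspace with $\ell\geq k$, of a $k$-dimensional subspace through an arbitrarily chosen point, which is immediate from linear algebra in the underlying vector space. One could equivalently phrase the argument by induction on $\ell-k$, showing that $\ell$-cutting follows from $(\ell-1)$-cutting (each $\ell$-subspace being the span of two $(\ell-1)$-subspaces meeting in an $(\ell-2)$-subspace), but the single-step covering argument above is cleaner and avoids an induction hypothesis.
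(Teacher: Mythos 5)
Your proof is correct and rests on the same key observation as the paper's: an $\ell$-dimensional subspace is covered by its $k$-dimensional subspaces, each of which is spanned by its intersection with $\Omega$. The only difference is that you apply this covering directly for arbitrary $\ell$, whereas the paper does the single step from $k$ to $k+1$ and then inducts; this is a stylistic rather than substantive distinction.
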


\begin{proof}
If $k = r$, the statement is trivial. Otherwise, suppose $\tau_k(\Omega) = 0$. Any $(k+1)$-dimensional subspace $\Pi$ can be expressed as the union of its $k$-dimensional subspaces:
\[
\Pi = \bigcup_{\substack{\Sigma \leq \Pi \\ \dim(\Sigma) = k}} \Sigma = \bigcup_{\substack{\Sigma \leq \Pi \\ \dim(\Sigma) = k}} \langle \Sigma \cap \Omega \rangle \subseteq \langle \Pi \cap \Omega \rangle = \Pi.
\]
Hence, $\tau_{k+1}(\Omega) = 0$, and the result follows by induction.
\end{proof}

We now compute the cutting gaps for non-degenerate Hermitian varieties using properties of the induced polarity.

\begin{theorem}
\label{cut-H}
Let $\cH^r = \cH(r,q^2)$ be a non-degenerate Hermitian variety in $\PG(r,q^2)$. Then,
\[
\tau_{r-t}(\cH^r) =
\begin{cases}
0 & \text{if } t > \left\lfloor \frac{r}{2} \right\rfloor, \\
1 & \text{if } t \leq \left\lfloor \frac{r}{2} \right\rfloor.
\end{cases}
\]
\end{theorem}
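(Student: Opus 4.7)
The plan is to compute $\tau_{r-t}(\cH^r)$ by studying the rank of the restriction of the defining Hermitian form $H$ to an arbitrary $(r-t)$-dimensional subspace $\Pi$, combined with the polarity $\perp$ that $H$ induces on $\PG(r,q^2)$ and the classical bound $\lfloor (r-1)/2 \rfloor$ on the projective dimension of totally isotropic subspaces of $\cH^r$ (its Witt index).

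First, I would show that $\tau_{r-t}(\cH^r)\in\{0,1\}$ by a trichotomy on $\rho:=\rk H|_\Pi$, the rank of $H$ restricted to the $(r-t+1)$-dimensional underlying vector space of $\Pi$. Diagonalizing $H|_\Pi$: if $\rho=0$ then $\Pi\subseteq\cH^r$ and the intersection spans $\Pi$; if $\rho\geq 2$ then $\Pi\cap\cH^r$ is a Hermitian cone over a non-degenerate $\cH(\rho-1,q^2)$ of positive projective dimension, whose base spans its complement and hence the cone spans $\Pi$; if $\rho=1$ then $\Pi\cap\cH^r$ is exactly the kernel of $H|_\Pi$, a projective hyperplane of $\Pi$ of dimension $r-t-1$. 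Thus $\tau_{r-t}(\cH^r)=1$ precisely when some $\Pi$ of dimension $r-t$ carries a rank-$1$ restriction, and $0$ otherwise.

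Second, I would translate the rank-$1$ condition through the polarity. Setting $\Sigma:=\Pi\cap\Pi^{\perp}$, the condition $\rho(H|_\Pi)=1$ is equivalent to $\Sigma$ being a totally isotropic projective hyperplane of $\Pi$ --- so $\Sigma\subseteq\cH^r$ with $\dim\Sigma=r-t-1$ --- together with $\Pi\not\subseteq\cH^r$. Since $\dim\Sigma^{\perp}=r-1-\dim\Sigma=t$ and $\Pi\subseteq\Sigma^{\perp}$, the dimensional compatibility $r-t\leq t$ must hold, while the existence of such a $\Sigma$ inside $\cH^r$ requires $r-t-1\leq\lfloor(r-1)/2\rfloor$. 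Conversely, when these conditions are met, $\Sigma$ can be taken as a projective hyperplane of a maximal isotropic subspace of $\cH^r$ and extended inside $\Sigma^{\perp}$ by a non-isotropic point --- which exists because $\dim\Sigma^{\perp}$ exceeds the Witt index --- producing the desired $\Pi$. Comparing these two inequalities with $\lfloor r/2\rfloor$ yields the two regimes of the theorem.

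The main obstacle will be a careful parity-sensitive treatment of the boundary value of $t$: at the threshold one must verify, depending on the parity of $r$, whether a non-isotropic point of $\Sigma^{\perp}$ persists --- forcing the rank-$1$ configuration to exist --- or whether $\Sigma^{\perp}$ is itself forced into $\cH^r$, which would give $\rho=0$ instead of $\rho=1$. A secondary technical point is confirming, for the case $\rho\geq 2$, that the non-degenerate base $\cH(\rho-1,q^2)$ is non-empty and spans its ambient complement --- a classical fact about Hermitian varieties of positive dimension that ensures the cone spans $\Pi$ and contributes nothing to the cutting gap.
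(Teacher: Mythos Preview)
Your approach coincides with the paper's: both proofs determine $\tau_{r-t}(\cH^r)$ by examining the rank $\rho$ of the Hermitian form restricted to a codimension-$t$ subspace $\Pi$, using the lower bound $\rho\geq r+1-2t$ coming from the non-singularity of the ambient form. The paper compresses your rank trichotomy into the observation that $\Pi\cap\cH^r$ is always a blocking set for the lines of $\Pi$ (hence either spans $\Pi$ or is exactly a hyperplane of it), and it merely asserts the existence of rank-$1$ sections once $t\geq r/2$, whereas you actually build one via the polarity and the Witt-index bound --- so your version is a more explicit rendition of the same argument rather than a different route.

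One point worth flagging: the two constraints you isolate, $r-t\leq t$ and $r-t-1\leq\lfloor(r-1)/2\rfloor$, combine to say that rank-$1$ sections exist precisely when $t\geq\lceil r/2\rceil$. This agrees with what the paper's own proof derives (``$\tau_{r-t}=0$ for $t<r/2$'' and ``$\tau_{r-t}=1$ for $t\geq r/2$'') but is the reverse of how the two cases are displayed in the theorem statement; the discrepancy is a typographical swap in the statement, not a defect in your reasoning.
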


\begin{proof}
Hermitian varieties over finite fields always contain at least one rational point. Thus, any subspace $\Pi = \PG(V_{k+1})$ of dimension $k$ in $\PG(r,q^2)$ intersects $\cH^r$ in a blocking set for the lines of $\Pi$, which either spans $\Pi$ or is a hyperplane of it.

Let $H_{r+1}$ be the non-singular Hermitian matrix defining $\cH^r$. For any subspace $\Pi$ of codimension $t$, the induced Hermitian form has rank at least $r+1 - 2t$, as it corresponds to a $(r+1-t) \times (r+1-t)$ minor of $H_{r+1}$. If $r+1 - 2t > 1$, i.e., $t < r/2$, the form has rank at least $2$, and $\Pi \cap \cH^r$ spans $\Pi$, so $\tau_{r-t}(\cH^r) = 0$.

Conversely, if $t \geq r/2$, there exist subspaces $\Pi$ such that $\Pi \cap \cH^r$ is a hyperplane of $\Pi$, implying $\tau_{r-t}(\cH^r) = 1$.
\end{proof}

We leave as an open problem whether the list of cutting gaps in Theorem~\ref{cut-H} characterizes Hermitian varieties among quasi-Hermitian ones.

In $\PG(2,q)$, the only relevant value is $k = 1$. Then:
 $\tau_1(\Omega) = 0$ if and only if $\Omega$ is a $2$-fold blocking set for the lines;
 $\tau_1(\Omega) = 1$ if $\Omega$ is a blocking set but not $2$-fold;
 $\tau_1(\Omega) = 2$ if there exist lines disjoint from $\Omega$.
This provides lower bounds on the size of $\Omega$.

A trivial observation is that $\tau_k(\Omega) > k$ if and only if there exist $k$-subspaces disjoint from $\Omega$. In such cases, $\tau_k(\Omega)$ does not offer any further geometric insight. To refine this, we introduce the notion of \emph{modified cutting gaps}.

Let $-1 \leq s < k$. Define:
\[
\tau_{k,s}(\Omega) = k - \min\left\{ \dim\left(\langle \Pi \cap \Omega \rangle \right) \colon \dim(\Pi) = k, \dim(\langle \Pi \cap \Omega \rangle) \geq s \right\}.
\]
By definition, $\tau_{k,-1}(\Omega) = \tau_k(\Omega)$. Moreover, $\tau_{k,s}(\Omega) \leq \tau_{k,s'}(\Omega)$ whenever $s > s'$, so the sequence is non-increasing in $s$ for fixed $k$. If $\Omega$ spans at least an $s$-space, then $\tau_{k,s}(\Omega) \leq k - s$. While equality is expected in general, the following example shows that this is not always the case.

\begin{example}
Let $\Omega := \mathcal{Q}^{-}(3,q) \setminus \{P\}$ be the set of points of an elliptic quadric in $\PG(3,q)$ with one point $P$ removed. Then:
 $\tau_2(\Omega) = \tau_{2,-1}(\Omega) = 3$, since the tangent plane at $P$ is disjoint from $\Omega$;
 $\tau_{2,0}(\Omega) = 2$, as tangent planes at other points intersect $\Omega$ in a single point;
$\tau_{2,1}(\Omega) = 0$, since any plane intersecting $\Omega$ in at least two points meets it in a conic or a conic minus $P$.
\end{example}

We leave to future work a deeper investigation of the properties of modified cutting gaps and the classification of point sets for which $\tau_{k,s}(\Omega) < k - s$ for some values of $k$ and $s$.

\subsection{Intersection of $\cV^4_{\varepsilon}$ with lines} \label{subsec:ext_lines}
We now study the intersections of $\cV^4_{\varepsilon}$ with lines, as
to show that $\tau_1(\cV^4_{\varepsilon})=2$, i.e. $\cV^4_{\varepsilon}$ is
not a blocking set for the lines of $\PG(4,q^2)$.

Let $\Pi_0$ be the plane of equation $X_0=X_4=0$.
As seen before, $\Pi_0\cap \cV^4_{\varepsilon}$ is the Fermat curve of $\Pi_0$ with equation:
 \begin{equation}
\cF^2_{s+1}:X_1^{s+1}+X_2^{s+1}+X_3^{s+1}=0.
\end{equation}
where $s=2^\frac{e-1}{2}$. In the following, we are interested
in determining an external line to $\cV^4_{\varepsilon}$; so, it is enough to
find a line in $\Pi_0$ that does not intersect $\cF^2_{s+1}$.
We split the discussion depending on the value of integer $e$ modulo $4$.

\subsubsection{Case $e\equiv 1 \pmod 4$, $e\neq1$}
Let $\ell$ be  a line in the plane $\Pi_0$ with equations of the form $X_0=\alpha X_1+\beta X_2+X_3=X_4=0$. Our aim is to prove that there is at least a pair $(\alpha,\beta)\in \GF{q^2}^2\setminus \{(0,0)\}$ such that $\ell\cap \cF^2_{s+1}=\emptyset$ or, equivalently that the following system in $X_1,X_2,X_3$ has no non-trivial solutions:
\begin{equation} \label{intersection-e=1}
    \begin{cases}
    X_1^{s+1}+X_2^{s+1}+X_3^{s+1}=0\\
    \alpha X_1+\beta X_2+X_3=0.
    \end{cases}
\end{equation}
Substituting the second equation of System~\eqref{intersection-e=1} in the first, we obtain
\[
(1+\alpha^{s+1})X_1^{s+1}+\alpha^s \beta X_1^{s}X_2+\alpha\beta^{s}X_1X_2^s+(1+\beta^{s+1})X_{2}^{s+1}=0.
\]

\begin{proposition}\label{Nic}
 Let $e \equiv 1 \pmod 4$, $e > 1$ and $s= 2^{\frac{e-1}{2}}$. Then for every $\alpha \notin \GF{4}$ there exists $1\neq\beta \in \GF{q^2}$ such that the equation
\begin{equation}\label{eq:tuv}
(1+\alpha^{s+1})X_1^{s+1}+\alpha ^s \beta X_1^{s}X_2+\alpha\beta^{s}X_1X_2^s+(1+\beta^{s+1})X_{2}^{s+1}=0.
\end{equation}
has no non-trivial solutions in $\GF{q^2}^2$.
\end{proposition}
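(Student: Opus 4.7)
The plan is to interpret the problem geometrically, reduce it to a chord-existence question on a Fermat curve, and then construct such a chord by a clever ansatz.

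First, using the characteristic-$2$ identity $(\alpha u+\beta)^{s+1}=\alpha^{s+1}u^{s+1}+\alpha^s\beta u^s+\alpha\beta^s u+\beta^{s+1}$, I would rewrite the polynomial in the statement as
\[
g(u)=u^{s+1}+(\alpha u+\beta)^{s+1}+1.
\]
Hence $g(u)=0$ in $\GF{q^2}$ is equivalent to the affine line $y=\alpha u+\beta$ meeting the Fermat curve $C\colon x^{s+1}+y^{s+1}=1$. Since $\gcd(s+1,q^2-1)=1$ for $e\equiv1\pmod4$, the map $x\mapsto x^{s+1}$ is a bijection on $\GF{q^2}$, $C$ has exactly $q^2$ affine $\GF{q^2}$-points, and for each $u\in\GF{q^2}$ there is a \emph{unique} $\beta$ with $g_{\alpha,\beta}(u)=0$.

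Second, by this uniqueness I would observe the double-counting identity $\sum_{\beta}\#\{u:g_{\alpha,\beta}(u)=0\}=q^2$. If some $\beta_{0}$ yields at least two roots of $g_{\alpha,\beta_{0}}$ (i.e.\ $\alpha$ is a \emph{chord slope} of $C$), then by pigeonhole some other $\beta$ yields none. Moreover, because $(0,1)\in C$ the line $\beta=1$ always meets $C$, so the external $\beta$ thereby produced is automatically distinct from $1$. The task therefore reduces to exhibiting, for every $\alpha\notin\GF{4}$, a chord of $C$ of slope $\alpha$.

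Third, for the construction I would take the ansatz $y_1=\gamma x_{1}$ with $\gamma:=\alpha^{-s}$ — meaningful because $\alpha\notin\{0,1\}$ and $\gcd(s,q^2-1)=1$ force $\gamma\neq1$ — determine $x_{1}\in\GF{q^2}^{*}$ by $x_{1}^{s+1}=1/(1+\gamma^{s+1})$ so that $(x_{1},\gamma x_{1})\in C$, and seek the second endpoint in the form $(x_{1}+t,\gamma x_{1}+\alpha t)$. Imposing membership in $C$ and expanding via the char-$2$ binomial theorem, the relation $\gamma\alpha^{s}=1$ annihilates the cross-term, collapsing the condition to
\[
(1+\alpha^{s+1})\,(t/x_{1})^{s}\;=\;1+\alpha^{1-s^{2}}.
\]

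The main obstacle, and the genuinely delicate step, is showing that this equation admits a nonzero solution precisely when $\alpha\notin\GF{4}$. Using $\gcd(2^{a}-1,2^{b}-1)=2^{\gcd(a,b)}-1$ together with $\gcd(m,e)=1$ for $e=4j+1$, $m=2j$, one obtains
\[
\gcd(s^{2}-1,\,q^{2}-1)=2^{\gcd(2m,2e)}-1=2^{2}-1=3,
\]
so the $(s^{2}-1)$-th roots of unity in $\GF{q^2}^{*}$ form the unique order-$3$ subgroup, namely $\GF{4}^{*}=\{1,\omega,\omega^{2}\}$. Thus $\alpha^{s^{2}-1}=1$ if and only if $\alpha\in\GF{4}^{*}$, and for $\alpha\notin\GF{4}$ the right-hand side $1+\alpha^{1-s^{2}}$ is nonzero. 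The denominator $1+\alpha^{s+1}$ is nonzero since $\alpha\neq 1$, and $\gcd(s,q^{2}-1)=1$ guarantees a unique nonzero $s$-th root $\tau=t/x_{1}\in\GF{q^{2}}^{*}$, giving a genuine chord and completing the proof. The choice $\gamma=\alpha^{-s}$ is precisely what forces the cross-term to vanish and identifies $\GF{4}$ as the only obstruction.
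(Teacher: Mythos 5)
Your proof is correct, but it takes a genuinely different route from the paper's. The paper dehomogenizes \eqref{eq:tuv} to $X^{s+1}+tX^s+uX+v=0$, reduces this by an affine substitution (legitimate exactly when $u\neq t^s$, which is shown to fail only for $\alpha\in\GF{4}$) to Bluher's normal form $X^{s+1}+X+a$, and then invokes Bluher's count of the elements $a\in\GF{q^2}$ for which that trinomial has no root, solving backwards for a $\beta$ that realizes a suitable $a$ and checking $\beta\neq1$. You instead recast \eqref{eq:tuv} as the incidence of the affine line $y=\alpha x+\beta$ with the Fermat curve $x^{s+1}+y^{s+1}=1$, use the bijectivity of $x\mapsto x^{s+1}$ (valid since $\gcd(s+1,q^2-1)=1$ for $e\equiv1\pmod 4$) to see that the curve has exactly $q^2$ affine points distributed over the $q^2$ parallel lines of slope $\alpha$, and conclude by pigeonhole that an external line of slope $\alpha$ exists as soon as a chord of that slope does; the chord is then produced explicitly by the ansatz $\gamma=\alpha^{-s}$, and the obstruction $\alpha^{s^2-1}=1$ is identified with $\alpha\in\GF{4}^{*}$ via $\gcd(2^{e-1}-1,2^{2e}-1)=3$ --- the very same computation the paper uses to exclude $u=t^s$, so the $\GF{4}$ condition appears for the same underlying reason in both arguments. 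Your version is self-contained and elementary (no appeal to Bluher's theorem) and makes the role of $\GF{4}$ and the choice of $\beta\neq1$ (via the point $(0,1)$ on the curve) geometrically transparent; the paper's version, in exchange, is quantitative, showing that roughly $\tfrac{2}{5}q^2$ values of $a$, hence many external lines, are available. One small point to add for completeness: you should dispose of the solutions of \eqref{eq:tuv} with $X_2=0$, which exist only if $1+\alpha^{s+1}=0$, i.e.\ $\alpha=1$ (again by $\gcd(s+1,q^2-1)=1$), and this is excluded by the hypothesis $\alpha\notin\GF{4}$.
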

\begin{proof}
Since $X_2=0$ gives the trivial solution, we can assume  $X_2 \neq 0$. Moreover, since $\alpha \neq 1$, substituting $X= X_1/X_2$ in \eqref{eq:tuv}, we get
\[
X^{s+1}+\frac{\alpha^{s}\beta}{1+\alpha^{s+1}}X^{s}+\frac{\alpha\beta^{s}}{1+\alpha^{s+1}}X+\frac{1+\beta^{s+1}}{1+\alpha^{s+1}}=0.
\]
Putting
\begin{equation}
t = \frac{\alpha^s \beta}{1 + \alpha^{s+1}}, \qquad u = \frac{\alpha \beta^s}{1 + \alpha^{s+1}}, \,\, \textnormal{ and } \,\, v = \frac{1 + \beta^{s+1}}{1 + \alpha^{s+1}},
\end{equation}
the equation
\begin{equation}\label{eq:t-u-v}
    X^{s+1} + tX^s + uX + v = 0
\end{equation}
has no solution in $\GF{q^2}$ if and only if Equation \eqref{eq:tuv} has no non-trivial solution in $\GF{q^2}^2$.
 Consider the polynomial $f_a=X^{s+1}+X+a$ with $a \in \GF{q^2}$. By \cite[Theorem 5.6]{B},  the set  $N_0=\{a \in \GF{q^2} \colon f_a \textnormal{ has no roots in }\GF{q^2}\}$ has size $\frac{2}{5}(2^{2e}+1) \geq 410$ for $e \geq 5$. Then, given $\alpha\in \GF{q^2} \setminus \GF{4}$, we may choose $a\in N_0$ such that 
\begin{equation} \label{conditions:a}
a\neq \frac{(\alpha^{s+1}+1)^{\frac{(s+1)^2}{s}-2}}{(\alpha+\alpha^{s^2})^{\frac{s+1}{s}}} \,\, \textnormal{ and }\,\, a\neq \frac{\alpha^{s+1}(\alpha^{s+1}+1)^{\frac{(s+1)^2}{s}-2}}{(\alpha+\alpha^{s^2})^{\frac{s+1}{s}}}.
\end{equation}
Take now $\beta \in \GF{q^2}$ such that
\begin{equation}\label{n}
\beta^{s+1}=\frac{(1+\alpha^{s+1})^{\frac{(s+1)^2}{s}-1}}{a(\alpha+\alpha^{s^2})^{\frac{s+1}{s}}+(1+\alpha^{s+1})^{\frac{(s+1)^2}{s}-2}}.
\end{equation}
Since every element of $\GF{q^2}$ is always an $(s+1)$-th power, such an element $\beta$ exists.
Firstly, it is straightforward to see that $\beta$ cannot be equal to $1$, otherwise $a$ is equal to the second expression in \eqref{conditions:a}, getting a contradiction.
Then, we will show that for this choice of $\alpha \in \GF{q^2} \setminus \GF{4}$ and $\beta$ as in \eqref{n}, Equation \eqref{eq:t-u-v} can be transformed in $X^{s+1}+X+a=0$ and hence it does not have any solutions in $\GF{q^2}$. 
Then, we prove that $u \neq t^ s$ and $v \neq u t$. Indeed, if $u=t^s$, we get 
\begin{equation}
    \frac{\alpha\beta^{s}}{1+\alpha^{s+1}}=\frac{\alpha^{s^2}\beta^s}{1+\alpha^{s^2+s}}.
\end{equation}
    This is equivalent to $\alpha^{s^2}=\alpha$ and hence $\alpha^{2^{e-1}-1}=1$. Since $\gcd(e-1,2e)=2$, $\alpha \in \GF{4}$ against the hypotheses. On the other hand, if $v=ut$, we have
   \begin{equation}
\frac{\beta^{s+1}+1}{\alpha^{s+1}+1}=\frac{\alpha^{s+1}\beta^{s+1}}{(\alpha^{s+1}+1)^2}
   \end{equation} 
and hence $\alpha^{s+1}+\beta^{s+1}+1=0$. Substituting $\beta^{s+1}=\alpha^{s+1}+1$
 in \eqref{n}, we get
 \begin{equation*}
     a(\alpha+\alpha^{s^2})^{\frac{(s+1)^2}{s}}(\alpha^{s+1}+1)=0.
 \end{equation*}
Since $\alpha \not \in \GF{4}$, this implies that $a=0$, a contradiction because $a \in N_0$.
By performing in \eqref{eq:tuv} the substitution $X \longmapsto (u+t^s)^{s^{-1}}X+t$, we get
 \begin{equation}\label{eq:tuvx}
 X^{s+1}+X+\frac{tu+v}{(u+t^{s})^{\frac{s+1}{s}}}=0.
 \end{equation}
In order to complete the proof, it is enough to prove that $\frac{tu+v}{(u+t^s)^{\frac{s+1}{s}}}=a$. Then,
\begin{equation*}
\frac{tu+v}{(u+t^s)^{\frac{s+1}{s}}}=\frac {\alpha^{s+1}+\beta^{s+1}+1}{(\alpha^{s+1}+1)^2}\cdot \frac{(1+\alpha^{s+1})^{\frac{(s+1)^2}{s}}}{\beta^{s+1}(\alpha+\alpha^{s^2})^{\frac{s+1}{s}}}.
\end{equation*}
Substituting \eqref{n} in the expression above, we get
\begin{equation*}
\begin{split}
&\frac{(\alpha^{s+1}+1)\left (a(\alpha+\alpha^{s^2})^{\frac{s+1}{s}}+(1+\alpha^{s+1})^{\frac{(s+1)^2}{s}-2}\right)+(1+\alpha^{s+1})^{\frac{(s+1)^2}{s}-1}}{(\alpha^{s+1}+1)^2 \left (a(\alpha+\alpha^{s^2})^{\frac{s+1}{s}}+(1+\alpha^{s+1})^{\frac{(s+1)^2}{s}-2} \right )} \cdot \\
& \frac{\left (a(\alpha+\alpha^{s^2})^{\frac{s+1}{s}}+(1+\alpha^{s+1})^{\frac{(s+1)^2}{s}-2})\right)(1+\alpha^{s+1})^{\frac{(s+1)^2}{s}}}{(\alpha+\alpha^{s^2})^{\frac{s+1}{s}}(1+\alpha^{s+1})^{\frac{(s+1)^2}{s}-1}}\\
&=\frac{(\alpha^{s+1}+1)^2\left (a(\alpha+\alpha^{s^2})^{\frac{s+1}{s}}+(1+\alpha^{s+1})^{\frac{(s+1)^2}{s}-2}\right)+(1+\alpha^{s+1})^{\frac{(s+1)^2}{s}}}{(\alpha+\alpha^{s^2})^{\frac{s+1}{s}}(1+\alpha^{s+1})^2}=a.
\end{split}
\end{equation*}
\end{proof}

\begin{comment}
\begin{remark}    
   Every line $\ell_m : X_0=mX_1+X_2=X_4=0$ meets the Fermat curve of equation $\cF_{s+1,2}$
where $s=2^\frac{e-1}{2}$.
Indeed, the points in these intersection satisfy the system
\begin{equation}\label{eq:systmx}
    \begin{cases}
        mX_1+X_2=0\\
        X_1^{s+1}+X_2^{s+1}+X_3^{s+1}=0.
    \end{cases}
\end{equation}
and hence the equation
  $$(m^{s+1}+1)X_1^{s+1}+X_3^{s+1}=0.$$
If $m=1$ and, hence $m^{s+1}=1$, then $\ell_m \cap \cF_{s+1,2}=P(0,1,1,0,0)$. If $m \neq 1$, then 
$\ell_m$ is external to $\cF_{s+1,2}$ if and only if 
$$\xi^{s+1}={m^{s+1}+1}$$
has no solution (here $\xi=X_3/X_1$). Since $\gcd(q^2-1,s+1)=\gcd(2^{2e}-1,2^\frac{e-1}{2}+1)=1$ for $e \equiv 1 \pmod 4$,  any $ 1 \neq m \in \GF{q^2}$, ${m^{s+1}+1}$ is an $(s+1)$-th power of an element in $\GF{q^2}$. In particular, the equation $\xi^{s+1}={m^{s+1}+1}$ has a unique solution for each $m \in GF(q^2)$  and hence we get that the number of rational points of $\cF_{s+1,2}$ is $q^2+1$.
\end{remark}
\end{comment}

\subsubsection{Case $e \equiv 3 \pmod 4$}
Suppose now $e \equiv 3 \pmod 4$ and recall that $\gcd(q^2-1,s+1)=3$. {We show that in this case there is at least one line of equation 
$\ell_{\alpha} : X_0=\alpha X_1+X_2=X_4=0$ that does not meet the Fermat curve $\cF^2_{s+1}$}. Indeed, the points in $\ell_{\alpha} \cap \cF^2_{s+1}$   satisfy the system
	 \begin{equation}\label{eq:systmx}
	 	\begin{cases}
	 		X_0=0\\
	 		\alpha X_1+X_2=0\\
	 		X_4=0\\
	 		X_1^{s+1}+X_2^{s+1}+X_3^{s+1}=0.
	 	\end{cases}
	 \end{equation}
Hence we need to   study the solutions of equation
 \[   (\alpha^{s+1}+1)X_1^{s+1}+X_3^{s+1}=0. \]
If $\alpha^{s+1}=1$, then $\alpha \in \{c_0=1,c_1, c_2\}$ with $c_i^{s+1}=1$, $i=0,1,2$. By the equation above, $X_3=0$  and the line $\ell_{c_i}$ meets the curve only at the point 
$(0,1,c_i,0,0)$. If $\alpha^{s+1}\neq 1$,  we can suppose $X_3\neq 0$ and
consequently also $X_1\neq 0$. Putting $X=X_3/X_1$, we study the solutions of the equation

\begin{equation}\label{eq:ms+1}
X^{s+1}={\alpha^{s+1}+1}
\end{equation}
for $\alpha$ ranging in $\GF{q^2}\setminus \{1,c_1,c_2\} $.

Assume by contradiction that Equation~\eqref{eq:ms+1} has at least one solution for every $\alpha \in \GF{q^2}\setminus \{1,c_1,c_2\} $. 
If follows that if  $\gamma$ is a solution of \eqref{eq:ms+1} for a given $\alpha$,  then  $c_i \gamma$ is a solution as well and the corresponding line $\ell_\alpha$ has at least three points in common with $\cF^2_{s+1}$. Moreover, the line with equations $X_0=X_2=X_4=0$ meets the Fermat curve in the points $(0,1,0,1,0)$, $(0,c_1,0,1,0)$, $(0,c_2,0,1,0)$.
This implies that the number $N$ of rational points of $\cF^2_{s+1}$ is $N\geq 3(q^2-2)+3=3(2^{2e}-1)$. On the other hand the Fermat curve is non-singular and hence,
 by the Hasse-Weil bound, $$N\leq q^2+1+qs(s-1)=2^{2e}+2^{2e-1}-2^{\frac{3e-1}{2}}+1.$$ 
Nevertheless, $2^{2e}+2^{2e-1}-2^{\frac{3e-1}{2}}+1\leq 3(2^{2e}-1)$ for $e \geq 3$. Therefore, this leads to a contradiction. Hence, it follows  that there is at least an element $\alpha$ such that the corresponding line $\ell_\alpha$ is external to $\cF^2_{s+1}$.

\subsection{Intersection of $\cV^4_{\varepsilon}$ with planes}

In the following, we show that $\tau_2(\cV^4_{\varepsilon})=0$
studying how  planes meet $\cV^4_{\varepsilon}$.

\begin{lemma}\label{teo:tangent_plane}
  In $\PG(4,q^2)$, there exists a plane that meets the variety $\cV^4_{\varepsilon}$ in exactly one point. 
\end{lemma}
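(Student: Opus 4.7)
The plan is to exhibit such a plane explicitly, exploiting the structure of $\cV^4_\varepsilon$ at infinity. Recall from the proof of Theorem~\ref{th:inter_hyp} that $\cV^4_\varepsilon \cap \Pi_\infty$ coincides set-theoretically with the cone having vertex $P_\infty = (0,0,0,0,1)$ over the Fermat curve $\cF^2_{s+1}:X_1^{s+1}+X_2^{s+1}+X_3^{s+1}=0$ lying in the plane $\Pi_0:X_0=X_4=0$, where $s=2^{(e-1)/2}$. The idea is to take a plane $\pi\subset \Pi_\infty$ through $P_\infty$ whose trace on $\Pi_0$ is a line disjoint from $\cF^2_{s+1}$; then no point of the cone other than $P_\infty$ itself can possibly lie in $\pi$.

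Concretely, Section~\ref{subsec:ext_lines} produces, for every odd integer $e\geq 3$, a line $\ell\subset \Pi_0$ external to $\cF^2_{s+1}$: via Proposition~\ref{Nic} when $e\equiv 1\pmod 4$ and via a Hasse--Weil counting argument when $e\equiv 3\pmod 4$. Since $P_\infty\notin \Pi_0$, the join $\pi:=\langle P_\infty,\ell\rangle$ is a plane contained in $\Pi_\infty$ and it satisfies $\pi\cap \Pi_0=\ell$ (two distinct planes inside the $3$-dimensional $\Pi_\infty$ meet in a line).

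To conclude that $\pi\cap \cV^4_\varepsilon=\{P_\infty\}$, note first that $\pi\subset \Pi_\infty$ rules out affine intersections, so it suffices to inspect the cone. For any $Q\in \pi\setminus\{P_\infty\}$, the line $P_\infty Q\subset \pi$ is not contained in $\Pi_0$ (because $P_\infty\notin \Pi_0$), so it meets $\Pi_0$ in a unique point $R$; but $R\in \pi\cap \Pi_0=\ell$, and $\ell$ is external to $\cF^2_{s+1}$, so $R\notin \cF^2_{s+1}$. Hence the line $P_\infty Q$ meets the cone only at $P_\infty$, and in particular $Q\notin \cV^4_\varepsilon$. Combined with $P_\infty\in \cV^4_\varepsilon$, this yields $|\pi\cap \cV^4_\varepsilon|=1$.

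The substantive difficulty — producing an external line to $\cF^2_{s+1}$ — has already been carried out in Section~\ref{subsec:ext_lines}, relying on a delicate trinomial analysis for $e\equiv 1\pmod 4$ and on the Hasse--Weil bound for $e\equiv 3\pmod 4$. Once this input is in hand, the lemma reduces to the elementary pencil-of-lines observation above, so no further obstacle remains.
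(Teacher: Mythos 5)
Your proof is correct and follows essentially the same route as the paper's: both take the external line $\ell\subset\Pi_0$ produced in Subsection~\ref{subsec:ext_lines}, form the plane $\langle P_\infty,\ell\rangle\subset\Pi_\infty$, and use the cone structure of $\cV^4_{\varepsilon,\infty}$ over $\cF^2_{s+1}$ to conclude that this plane meets the variety only at $P_\infty$. The only cosmetic difference is that the paper phrases the final step as a contradiction while you argue directly; the content is identical.
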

\begin{proof}
  By the results of Subsection~\ref{subsec:ext_lines},
  there exists a line $\ell$ external to the Fermat curve $\cF^2_{s+1}$ in the plane $\Pi_0: X_0=X_4=0$. In order to prove the statement, it is enough to show that the plane $\Pi= \langle P_{\infty},\ell \rangle$ meets $\cV_{\varepsilon,\infty}$ in one point. By construction $P_{\infty}$ belongs to $\Pi \cap \cV^4_{\varepsilon,\infty}$; suppose that there exists a point $Q \in \Pi \cap \cV^4_{\varepsilon,\infty}$, $Q\neq P_{\infty}$. Since $\cV^4_{\varepsilon,\infty}$ is a cone with vertex $P_{\infty}$ over $\cF^2_{s+1}$ than there exists a point $R \in\cF^2_{s+1}$ such that $Q$ belongs to the line $P_{\infty}R$. Then $P_{\infty}Q=P_{\infty}R \subseteq \Pi$ and this meets the line $\ell$ in the point $R$, getting a contradiction.
\end{proof}

\begin{theorem}\label{teo:external_plane}
Each plane in  $\PG(4,q^2)$ meets the variety $\cV^4_{\varepsilon}$  in at least one point.
\end{theorem}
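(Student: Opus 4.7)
My plan is to proceed by case analysis on how the plane $\pi$ sits relative to $\Pi_\infty$ and $P_\infty$, dispatching the easy configurations directly and reducing the remaining case to a counting argument over the pencil of hyperplanes through $\pi$, exploiting the quasi-Hermitian structure of $\cH^4_\varepsilon$.

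Since $\dim \pi = 2$ and $\dim \Pi_\infty = 3$ in $\PG(4,q^2)$, one has either $\pi \subseteq \Pi_\infty$ or $\pi \cap \Pi_\infty = \ell$ is a line. If $P_\infty \in \pi$, we are done because $P_\infty \in \cV^4_\varepsilon$. If $\pi \subseteq \Pi_\infty$ with $P_\infty \notin \pi$, projection from $P_\infty$ induces a bijection $\pi \to \Pi_{r-2}$ identifying $\pi \cap \cV^4_{\varepsilon,\infty}$ with $\cF^2_{s+1}$, which is non-empty. Likewise, if $\pi \not\subseteq \Pi_\infty$ and the projection $\ell'$ of $\ell$ onto $\Pi_{r-2}$ meets $\cF^2_{s+1}$, the same bijection produces a point of $\ell \cap \cV^4_{\varepsilon,\infty}$.

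The only remaining configuration is $\pi \not\subseteq \Pi_\infty$, $P_\infty \notin \ell$, and $\ell' \cap \cF^2_{s+1} = \emptyset$; here $\pi \cap \cV^4_{\varepsilon,\infty} = \emptyset$ and one must produce an affine point of $\cV^4_\varepsilon$ on $\pi$. Assuming $\pi \cap \cV^4_\varepsilon = \emptyset$ for contradiction, I consider the $q^2 + 1$ hyperplanes $H \supset \pi$; the assignment $H \mapsto \pi_H := H \cap \Pi_\infty$ bijects these with the $q^2 + 1$ planes of $\Pi_\infty$ through $\ell$, of which exactly one, $\pi_{H_*} := \langle P_\infty, \ell \rangle$, contains $P_\infty$. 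For $H \neq H_*$, projection from $P_\infty$ restricts to a bijection $\pi_H \to \Pi_{r-2}$ sending $\pi_H \cap \cV^4_{\varepsilon,\infty}$ onto $\cF^2_{s+1}$ and $\pi_H \cap \cH^4_{\varepsilon,\infty}$ onto $\cH(2,q^2)$, so $|\pi_H \cap \cH^4_{\varepsilon,\infty}| = q^3 + 1$. For $H_*$, the identification $\pi_{H_*} \cap \Pi_{r-2} = \ell'$ gives $|\pi_{H_*} \cap \cV^4_{\varepsilon,\infty}| = 1$ and $|\pi_{H_*} \cap \cH^4_{\varepsilon,\infty}| = 1 + q^2 k$, where $k := |\ell' \cap \cH(2,q^2)| \in \{1, q+1\}$.

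Since the hyperplanes through $\pi$ partition $\cV^4_\varepsilon$ under the contradiction hypothesis, counting affine points yields
\[
\sum_{H \supset \pi} \bigl| H \cap \cH^4_\varepsilon \cap \AG(4,q^2)\bigr| = |\cV^4_\varepsilon \cap \AG(4,q^2)| = q^7.
\]
Writing $|H \cap \cH^4_\varepsilon \cap \AG(4,q^2)| = |H \cap \cH^4_\varepsilon| - |\pi_H \cap \cH^4_{\varepsilon,\infty}|$ and invoking the quasi-Hermitian property $|H \cap \cH^4_\varepsilon| \in \{q^5 + q^2 + 1,\, q^5 + q^3 + q^2 + 1\}$, and letting $u$ denote the number of $H \neq H_*$ attaining the smaller value, the identity collapses to
\[
|H_* \cap \cH^4_\varepsilon \cap \AG(4,q^2)| = q^3(u - q).
\]
The admissible values of the left-hand side, namely $q^5 - q^3$, $q^5$, or $q^5 + q^3$, respectively force $u \in \{q^2 + q - 1,\, q^2 + q,\, q^2 + q + 1\}$, each violating the natural bound $u \leq q^2$. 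This is the sought contradiction. The principal obstacle is the careful bookkeeping for the distinguished hyperplane $H_*$, whose infinite section is a cone on only $k$ Hermitian lines rather than on a full Hermitian curve, and it is precisely this asymmetry between $H_*$ and the generic hyperplanes in the pencil that makes the count fail.
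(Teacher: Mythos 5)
Your argument is correct and rests on the same device as the paper's own proof: assume a plane disjoint from $\cV^4_{\varepsilon}$ and count the affine points of $\cV^4_{\varepsilon}$ over the $q^2+1$ hyperplanes of the pencil through that plane. The paper reaches the contradiction much faster, however: since every hyperplane meets $\cV^4_{\varepsilon}\cap\AG(4,q^2)$ in at least $q^5-q^3+q^2$ affine points (by the computations in Theorem~\ref{th:inter_hyp} for $r=4$), one already has $(q^2+1)(q^5-q^3+q^2)>q^7=|\cV^4_{\varepsilon}\cap\AG(4,q^2)|$, so your preliminary case reduction and the exact bookkeeping for $H_*$, $u$ and $k$ --- while all verifiably correct --- are not actually needed.
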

\begin{proof}
  Suppose there exists a plane $\Pi$ disjoint from  the variety $\cV^4_{\varepsilon}$. This plane must  meet the hyperplane at infinity in a line. Consider the $q^2+1$ hyperplanes of $\PG(4,q^2)$ containing $\Pi$.
By \Cref{th:inter_hyp} for $r=4$,
each of these planes meets  $\cV^4_{\varepsilon}\cap \AG(4,q^2)$ in $q^5-q^3+q^2 $, or $q^5$, or $ q^5+q^2$ affine points, 
Since the cardinality of $\cV^4_{\varepsilon}$ is $q^7$ it follows that
\[ q^7=|\cV^4_{\varepsilon}\cap\AG(4,q^2)|\geq(q^2+1)(q^5-q^3+q^2)> q^7,\]
a contradiction.
\end{proof}

\begin{lemma}
Each line  of $\PG(2,q^2)$ meets the  Fermat curve $\cF^2_{s+1}$  in  $0, 1, 2$ or $t$ points where $t=5$ if $e\equiv1\pmod4$ and $e>1$ or $t=3$ if $e\equiv3\pmod 4$.
\end{lemma}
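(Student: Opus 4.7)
The plan is to distinguish two cases according to whether the line $\ell$ contains the point $P_0=(0,0,1)\notin\cF^2_{s+1}$, and for lines avoiding $P_0$ to subdivide further according to whether the leading coefficient of the resulting polynomial vanishes. If $P_0\in\ell$, then $\ell$ has an equation $\alpha X_1+\beta X_2=0$ and substituting into the Fermat equation yields $c\,X_1^{s+1}+X_3^{s+1}=0$ for some $c\in\GF{q^2}$; using $\gcd(s{+}1,q^2{-}1)=1$ for $e\equiv 1\pmod 4$ and $\gcd(s{+}1,q^2{-}1)=3$ for $e\equiv 3\pmod 4$ (Lemma~\ref{lemma:intersection}), this yields $1$ intersection in the first case and $0$, $1$, or $3$ in the second.

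If $P_0\notin\ell$, I would normalize $\ell$ to $X_3=aX_1+bX_2$ and substitute, obtaining in $X:=X_1/X_2$ the polynomial
\[
p(X)=TX^{s+1}+UX^{s}+VX+W,\quad T=1{+}a^{s+1},\ U=a^{s}b,\ V=ab^{s},\ W=1{+}b^{s+1},
\]
with $X_2=0$ contributing an extra intersection $(1,0,a)$ precisely when $T=0$. When $T\neq 0$, the substitution $Y=TX+U$ brings $p$ into the Bluher form $Y^{s+1}+\eta Y+\zeta$; by~\cite{B} the number of $\GF{q^2}$-rational roots lies in $\{0,1,2,s{+}1\}$, and the top value $s{+}1$ is reached only if $\GF{2^{e-1}}\subseteq\GF{q^2}$, which among odd $e\geq 3$ holds solely for $e=3$, so in all other cases this branch contributes at most $2$ points. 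When $T=0$, one has $a^{s+1}=1$ and $p(X)=UX^{s}+VX+W$ is $\GF{2}$-affine with kernel of size $1+\gcd(s{-}1,q^2{-}1)$; the identity $\gcd(2^{i}{-}1,2^{j}{-}1)=2^{\gcd(i,j)}{-}1$ gives kernel $4$ for $e\equiv 1\pmod 4$ and $2$ for $e\equiv 3\pmod 4$, hence $0$ or $4$ (respectively $2$) roots of $p$; adding the $X_2=0$ intersection produces the counts $1$ or $5$ (respectively $1$ or $3$). Collecting all subcases yields the intended sets $\{0,1,2,5\}$ and $\{0,1,2,3\}$.

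The main obstacle is the Bluher step: one must invoke~\cite{B} in a form that both restricts the possible root counts to $\{0,1,2,s{+}1\}$ and shows that $s{+}1$ is unattainable unless $s{+}1\mid q^2{-}1$. Additional care will be needed to treat the degenerate configurations $a=0$, $b=0$, and the tangency case $1+a^{s+1}+b^{s+1}=0$: in the last situation a direct Taylor expansion of $p$ around $X_0=(ab^{s}/(1+a^{s+1}))^{1/s}$, using the Frobenius identity $(X_0+Y)^{s}=X_0^{s}+Y^{s}$ in characteristic two, forces the factorization $p(X)=T(X+X_0)^{s}(X+U/T)$ and yields at most two distinct $\GF{q^2}$-rational roots.
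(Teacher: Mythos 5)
Your overall strategy (substitute the line, reduce to a trinomial $Y^{s+1}+\eta Y+\zeta$, count its roots) runs parallel to the paper's, but the decisive step is unjustified and, as stated, false. Bluher's results on $x^{q_0+1}+ax+b$ count roots in an extension $\GF{q_0^n}$ of $\GF{q_0}$; here $q_0=s=2^{\frac{e-1}{2}}$ while the ambient field is $\GF{2^{2e}}$, which contains $\GF{2^{\frac{e-1}{2}}}$ only when $\frac{e-1}{2}\mid 2e$ (true for $e=3,5$, false for $e=7,9,11,\dots$). So you may not conclude that the number of $\GF{q^2}$-rational roots lies in $\{0,1,2,s+1\}$. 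The correct statement --- the one the paper invokes via \cite[Lemma 6]{HN} --- is that the solutions of \eqref{nic1} are the absolute points of a (possibly degenerate) $\sigma$-sesquilinear form on $\PG(1,2^{2e})$ with $\sigma\colon x\mapsto x^{2^{\frac{e-1}{2}}}$, hence number $0$, $1$, $2$ or $\lvert\mathrm{Fix}(\sigma)\rvert+1=2^{\gcd\left(\frac{e-1}{2},\,2e\right)}+1$, which is $5$ for $e\equiv1\pmod 4$ and $3$ for $e\equiv3\pmod4$. Your exceptional value $s+1$ agrees with this only for $e\in\{3,5\}$ and is wrong for every odd $e\geq 7$; consequently the deduction that the $T\neq0$ branch contributes at most $2$ points unless $e=3$ is unsound --- nothing in a correct argument excludes lines of that branch meeting the curve in $t$ points, and your proof would wrongly rule them out. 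You yourself flag this as ``the main obstacle''; there is no version of Bluher's theorem that yields $\{0,1,2,s+1\}$ over $\GF{2^{2e}}$ in this generality.

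Once the trinomial step is replaced by the semilinear fixed-point count just described, your entire dichotomy ($P_0\in\ell$ or not, $T=0$ or not) becomes unnecessary: equation \eqref{nic1} is already homogeneous of exactly the right shape and the fixed-point lemma applies uniformly, which is the paper's one-line proof. Two smaller slips in your case analysis: in the $T=0$ branch the additive map $X\mapsto UX^{s}+VX$ can have trivial kernel (when $V/U$ is not an $(s-1)$-st power), giving exactly one affine root and hence $2$ intersections in total, a value you omit from that branch; and note that your kernel size $1+\gcd(s-1,q^2-1)\in\{2,4\}$ is again governed by $\gcd\left(\frac{e-1}{2},2e\right)$ rather than by any containment $\GF{s}\subseteq\GF{q^2}$ --- it is the same fixed-subfield phenomenon you would need, but did not use, in the $T\neq 0$ branch.
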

\begin{proof}
  We need to solve  the following system in $X_1,X_2,X_3$:
\begin{equation} \label{intersection}
    \begin{cases}
    X_1^{s+1}+X_2^{s+1}+X_3^{s+1}=0\\
    \alpha X_1+\beta X_2+\gamma X_3=0.
    \end{cases}
\end{equation}
Since all the expressions are symmetric in $X_1, X_2, X_3$ we can
assume without loss of generality $\gamma\neq 0$ i.e., equivalently, set $\gamma=1$. 
Substituting the second equation of System~\eqref{intersection} in the first
we obtain
\begin{equation}\label{nic1}
(1+\alpha^{s+1})X_1^{s+1}+\alpha^s \beta X_1^{s}X_2+\alpha\beta^{s}X_1X_2^s+(1+\beta^{s+1})X_{2}^{s+1}=0.
\end{equation}
The number of solutions of Equation~\eqref{nic1} corresponds to
the number of absolute points in $\PG(1,2^{2e})$ of a (possibly
degenerate) $s$-sesquilinear form. By~\cite[Lemma 6]{HN} this
corresponds to either $0,1,2$ or $t$ points where $t$ is the
cardinality of a subline over the fixed subfield of $s$.
In particular, for $e\equiv1\pmod 4$ we have $t=5$ and for
$e\equiv3\pmod4$ we have $t=3$.
\end{proof}

%If $\alpha^{s+1}+\beta^{s+1}+1=0$, then the number of solutions of \eqref{nic1} coincides with the number of the absolute points of a degenerate $s$-sesquilinear form of $\PG{(1,2^{2e})}$ and the result follows from \cite{HN}.
%Thus, assume $\alpha^{s+1}+\beta^{s+1}+1\neq 0$.
%If $e\equiv3\pmod 4$ and $\alpha\notin\GF{2}$ or   $e\equiv 1\pmod 4$ and $\alpha\notin\GF{4}$,
%by the proof of Proposition \ref{Nic},  the number of solutions of \eqref{nic1} equals the number of solutions of an equation of the type:
%\begin{equation}\label{nic2}
%X^{s+1}+X+d=0
%\end{equation}
%Thus, the result follows from \cite{B}.
%
%If $\alpha=0$, then~\eqref{nic1} becomes
%\[ X_1^{s+1}+(1+\beta^{s+1})X_2^{s+1}=0.\]
%This has $1$ solution for $e\equiv1\pmod4$ and $0$ solutions
%for $e\equiv3\pmod4$.
%
%If $\alpha=1$ then necessarily $\beta\neq0$ and~\eqref{nic1} becomes
%\[ \beta X_1^sX_2+\beta^sX_1X_2^s+(1+\beta^{s+1})X_2^{s+1}=0.\]
%Clearly $X_2=0$ is a solution; so suppose $X_2\neq0$, divide by
%$X_2^{s+1}\beta^{s+1}$ and put $X=X_1/X_2$. Then we get
%\[ X^s+X+(1+\delta^{s+1})=0\]
%where $\delta=\beta^{-1}$.
%\dots
%
%Now we only  have to consider the case  $\alpha \in \GF{4} \setminus \GF{2}$ and  $e\equiv 1$ $\pmod 4$.
%In this case  $\alpha^s=\alpha$ and  Equation~\eqref{nic1} can be rewritten as Equation~\eqref{eq:t-u-v} with $t^s=u$.
%By performing  in \eqref{eq:t-u-v} the substitution $X\rightarrow X+t$, this becomes
%\[X^{s+1}+ut+v^{s+1}=0,\] which has exactly  one solution.
%\end{proof}

We can now specialize the results of \Cref{th:code4} to the case $r=4$.

\begin{proposition} \label{prop:code4}
The linear code $\cC^4_{\varepsilon}=\cC(\cV^4_{\varepsilon})$ for $q=2^e$ and $e>1$ an odd integer is a $5$-dimensional minimal code with the following parameters 
\begin{enumerate}
\item If $e \equiv 1 \pmod 4$, then $\cC^4_{\varepsilon}$ has length $|\cV^4_{\varepsilon}|=q^7+q^4+q^2+1$ and  weights
  \begin{multline*}
    \{ q^7, q^7-q^5+q^4+q^3-q^2, q^7-q^5+q^4+q^2, \\
    q^7-q^5+q^4,q^7-q^5+q^4-q^2,
    q^7-q^5+q^4-4q^2 \};
  \end{multline*}
\item If $e=3$, then $\cC^4_{\varepsilon}$ has length
$|\cV_{\varepsilon}^4|=q^7+q^4+2q^3+q^2+1$ and  weights  
\begin{multline*}
  \{ q^7, q^7-q^5+q^4+3q^3-q^2, q^7-q^5+q^4+2q^3+q^2, \\ 
  q^7-q^5+q^4+2q^3, q^7-q^5+q^4+2q^3-q^2,
  q^7-q^5+q^4+2q^3-2q^2\}.
\end{multline*}

%\begin{equation*}
%\{(q+1)^2+q^5+q^2,q^5+(q+1)^2+q^2,q^5-%q^3+q^2+1,q^5+q^2+1,q^5-q^3+1,q^5+1\}
%\end{equation*}
\end{enumerate}
\end{proposition}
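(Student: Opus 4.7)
The length, dimension, and minimality are immediate specializations of results already established. Indeed, Theorem~\ref{th:code4} applied with $r=4$ says that $\cC^4_{\varepsilon}$ is a $5$-dimensional minimal code of length $|\cV^4_\varepsilon|$. For that length, Theorem~\ref{th:cone-over-Fermat} yields
\[
|\cV^4_\varepsilon| = q^7 + q^2\,|\cF^{2}_{n}| + 1, \qquad n = 2^{(e-1)/2}+1,
\]
and then Proposition~\ref{thm:cardinalityinf} pins down $|\cF^2_{n}|$: it equals $\theta_{q^2}(1) = q^2+1$ when $e\equiv 1\pmod 4$ (giving length $q^7+q^4+q^2+1$) and $(q+1)^2$ when $e=3$ (giving length $q^7+q^4+2q^3+q^2+1$).

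To list the weights, the plan is to compute $|\Pi\cap\cV^4_\varepsilon|$ for every hyperplane $\Pi$ of $\PG(4,q^2)$ and subtract from the length. When $\Pi=\Pi_\infty$ we simply get $|\cV^4_{\varepsilon,\infty}| = q^2|\cF^2_{n}|+1$, producing the weight $q^7$. Otherwise, decompose
\[
|\Pi\cap\cV^4_\varepsilon| = |\cV^4_\varepsilon\cap\AG(4,q^2)\cap\Pi| + |\pi\cap\cV^4_{\varepsilon,\infty}|, \qquad \pi := \Pi\cap\Pi_\infty.
\]
The four subcases in the proof of Theorem~\ref{th:inter_hyp} specialized to $r=4$ give the affine summand on the nose: $q^5+q^2$ and $q^5-q^3+q^2$ when $P_\infty\notin\Pi$ (non--tangent and tangent hyperplanes of $\cH^4$), and $q^5$ in each of the two subcases with $P_\infty\in\Pi$.

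For the infinity summand the key is that $\cV^4_{\varepsilon,\infty}$ is the cone with vertex $P_\infty$ over $\cF^2_n \subset \Pi_0$. If $P_\infty\in\pi$, set $\ell:=\pi\cap\Pi_0$; since the cone cuts $\pi$ in $P_\infty$ together with the full lines $P_\infty R$ for $R\in\ell\cap\cF^2_n$, we get $|\pi\cap\cV^4_{\varepsilon,\infty}| = 1 + q^2|\ell\cap\cF^2_n|$, and the last lemma of this section forces $|\ell\cap\cF^2_n|\in\{0,1,2,t\}$ with $t=5$ if $e\equiv 1\pmod 4$ and $t=3$ if $e\equiv 3\pmod 4$. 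If $P_\infty\notin\pi$, projection from $P_\infty$ bijects $\pi$ with $\Pi_0$ and sends $\pi\cap\cV^4_{\varepsilon,\infty}$ onto $\cF^2_n$, hence $|\pi\cap\cV^4_{\varepsilon,\infty}|=|\cF^2_n|$. Combining the two summands in every admissible configuration and collapsing the repetitions (for instance, for $e\equiv 1\pmod 4$ the case $P_\infty\in\Pi$ with a bisecant $\ell$ and the non--tangent $P_\infty\notin\Pi$ case both produce infinity contribution $2q^2+1$) yields the claimed list.

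The main obstacle is the bookkeeping: one must check that every combination in the list is actually realised by some hyperplane. For the $P_\infty\in\Pi$ subcases this rests on the existence in $\Pi_0$ of lines external, tangent, and secant to $\cF^2_n$, which is precisely what Subsection~\ref{subsec:ext_lines} together with Lemma~\ref{teo:tangent_plane} deliver (and for $e\equiv 3\pmod 4$ the value $t=3$ is attained on the coordinate line $X_0=X_2=X_4=0$ in $\Pi_0$). For the $P_\infty\notin\Pi$ subcases, the existence of both a tangent and a non--tangent hyperplane of $\cH^4$ missing $P_\infty$ follows from a counting argument analogous to the one used at the end of the proof of Theorem~\ref{th:inter_hyp}, since the tangent hyperplanes form a proper subvariety of the dual $\PG(4,q^2)$. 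Once all the admissible intersection cardinalities have been matched against the length, the six listed weights in each of the cases $e\equiv 1\pmod 4$ and $e=3$ emerge.
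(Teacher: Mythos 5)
Your strategy is the same as the paper's: length, dimension and minimality from \Cref{th:code4}, \Cref{th:cone-over-Fermat} and \Cref{thm:cardinalityinf}, and weights via the decomposition of $\vert\Pi\cap\cV^4_{\varepsilon}\vert$ into the affine contribution (the four cases of \Cref{th:inter_hyp} with $r=4$) plus the section of the cone $\cV^4_{\varepsilon,\infty}$ by $\Pi\cap\Pi_{\infty}$. For $e\equiv 1\pmod 4$ your bookkeeping checks out, up to one slip of wording: in the non-tangent case with $P_{\infty}\notin\Pi$ the contribution at infinity is $q^2+1$, not $2q^2+1$; the extra $q^2$ sits in the affine summand, and only the \emph{totals} of that case and of the bisecant case coincide.

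There is, however, a concrete gap in the case $e=3$ that your closing sentence (``the six listed weights emerge'') does not address. By your own projection argument, when $P_{\infty}\notin\Pi$ the plane $\pi=\Pi\cap\Pi_{\infty}$ meets the cone in exactly $\vert\cF^2_n\vert$ points, and for $e=3$ \Cref{thm:cardinalityinf} gives $\vert\cF^2_3\vert=(q+1)^2=q^2+2q+1$, not $q^2+1$. Adding the affine contributions $q^5+q^2$ and $q^5-q^3+q^2$ yields intersection sizes $q^5+2q^2+2q+1$ and $q^5-q^3+2q^2+2q+1$, hence weights $q^7-q^5+q^4+2q^3-q^2-2q$ and $q^7-q^5+q^4+3q^3-q^2-2q$, neither of which appears in the stated list (each differs by $2q$ from a listed value). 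Either you must explain where this $2q$ is absorbed, or the list in item 2 cannot be produced by the computation you describe; as written, the $e=3$ claim is not proved. A secondary point: to assert that every listed weight is actually attained you need lines of $\Pi_0$ realizing each of the intersection numbers $0,1,2,t$ with $\cF^2_n$, whereas the cited results only provide the external line and an upper bound on the spectrum; the existence of tangent, bisecant and $t$-secant lines (and of both tangent and non-tangent hyperplanes of $\cH^4_{\varepsilon}$ avoiding $P_{\infty}$, which you correctly flag) still needs an explicit argument.
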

\begin{proof} 
The result follows from \Cref{th:cone-over-Fermat}, \Cref{th:inter_hyp}, \Cref{thm:cardinalityinf} and \Cref{teo:tangent_plane}.  In order to compute the weights, it is enough to recall that 
$$ \vert \Pi_{\infty} \cap \cV_{\varepsilon}^4 \vert =1+q^2 \vert \cF_{n}^2 \vert $$
and
\begin{equation*}
     \vert \Pi \cap \cV_\varepsilon^4 \vert= \vert \Pi \cap \Pi_\infty \cap \cV_{\varepsilon}^4 \vert + \vert  \Pi \cap \cH^4_{\varepsilon} \vert - \vert \Pi \cap \Pi_{\infty} \cap \cH_{\varepsilon}^4 \vert 
\end{equation*}
for any hyperplane $\Pi$ distinct from $\Pi_{\infty}$.

\end{proof}
%\footnote{\textcolor{red}{Forse qualcuno di questi valori può essere eliminato esaminando la geometria di $H_{\varepsilon}^4$ e da lì abbiamo anche distanza minima.}}

Finally, by the results obtained above, we can state the following.

\begin{proposition}
    Consider the variety $\cV^4_{\varepsilon}$. Then 
        \begin{equation}
    \tau_1(\cV^4_{\varepsilon})=2, \quad   \tau_2(\cV^4_{\varepsilon})=2, \quad \tau_3(\cV^4_{\varepsilon})=0.       
    \end{equation}
\end{proposition}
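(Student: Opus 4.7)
The plan is to read off each of the three values of $\tau_k(\cV^4_{\varepsilon})$ directly from results already established in the excerpt, using the convention $\dim(\emptyset)=-1$ fixed at the start of Section~\ref{cutgap}. In each case the work reduces to producing a witnessing subspace that attains the extremal dimension, and, where necessary, ruling out a more extreme one.

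For $\tau_1(\cV^4_{\varepsilon})=2$, I would exhibit a line disjoint from $\cV^4_{\varepsilon}$, which forces $\min\{\dim\langle\ell\cap\cV^4_{\varepsilon}\rangle:\dim\ell=1\}=-1$ and hence $\tau_1=1-(-1)=2$. Such a line is constructed in Subsection~\ref{subsec:ext_lines} inside the plane $\Pi_0\colon X_0=X_4=0$: since $\Pi_0\cap\cV^4_{\varepsilon}$ equals the Fermat curve $\cF^2_{s+1}$ with $s=2^{(e-1)/2}$, it suffices to find a line in $\Pi_0$ external to $\cF^2_{s+1}$. For $e\equiv 1\pmod 4$ this is Proposition~\ref{Nic}, and for $e\equiv 3\pmod 4$ it follows from the Hasse–Weil counting argument given immediately afterwards. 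Since $\Pi_0\subset\Pi_{\infty}$, this line is external to the whole $\cV^4_{\varepsilon}$.

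For $\tau_2(\cV^4_{\varepsilon})=2$, I would combine two earlier statements. Lemma~\ref{teo:tangent_plane} produces a plane that meets $\cV^4_{\varepsilon}$ in a single point, so the span has dimension $0$; this shows $\tau_2\geq 2$. Theorem~\ref{teo:external_plane} shows that every plane in $\PG(4,q^2)$ meets $\cV^4_{\varepsilon}$ in at least one point, so the minimum dimension of $\langle\Pi\cap\cV^4_{\varepsilon}\rangle$ over all planes $\Pi$ is $0$ rather than $-1$; this gives $\tau_2\leq 2$. Together, $\tau_2=2-0=2$.

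For $\tau_3(\cV^4_{\varepsilon})=0$, I would specialise Theorem~\ref{th:inter_hyp} to $r=4$: every hyperplane $\Pi$ of $\PG(4,q^2)$ satisfies $\langle\Pi\cap\cV^4_{\varepsilon}\rangle=\Pi$, so $\dim\langle\Pi\cap\cV^4_{\varepsilon}\rangle=3$ for every such $\Pi$, and $\tau_3=3-3=0$. No obstacle is expected here, since all the heavy lifting has already been carried out; the only care needed is to apply the definition of $\tau_k$ with the $\dim(\emptyset)=-1$ convention and to note that the witnessing line of $\tau_1$, lying entirely in $\Pi_{\infty}$, actually misses $\cV^4_{\varepsilon}$ (not merely $\cF^2_{s+1}$), because $\Pi_0\cap\cV^4_{\varepsilon}=\cF^2_{s+1}$.
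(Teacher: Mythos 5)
Your proof is correct and follows exactly the paper's own argument: the external line of Subsection~\ref{subsec:ext_lines} for $\tau_1$, Lemma~\ref{teo:tangent_plane} together with Theorem~\ref{teo:external_plane} for $\tau_2$, and Theorem~\ref{th:inter_hyp} with $r=4$ for $\tau_3$. You merely spell out the bookkeeping (the $\dim(\emptyset)=-1$ convention and the observation that $\Pi_0\cap\cV^4_{\varepsilon}=\cF^2_{s+1}$) more explicitly than the paper does.
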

\begin{proof}
The first  cutting gap follows by Subsection \ref{subsec:ext_lines}. The second one follows by
\Cref{teo:tangent_plane} and \Cref{teo:external_plane}.  Finally, by
\Cref{th:inter_hyp}, we have the third  
 cutting gap. 
\end{proof}
Note that since there are lines external to $\cV^4_{\varepsilon}$ we have
        $\tau_{1,0}(\cV^4_{\varepsilon})=1\neq\tau_1(\cV^4_{\varepsilon})$.

\vspace{0.5cm}
\noindent
\begin{minipage}{\textwidth}
Angela Aguglia, Viola Siconolfi\\
Dipartimento di Meccanica, Matematica e Management,\\
Politecnico di Bari,\\
Via Orabona, 4-70125 Bari, Italy\\
\texttt{\{angela.aguglia,viola.siconolfi\}@poliba.it}
\end{minipage}

\vspace{0.8cm}

\noindent
\begin{minipage}{\textwidth}
  Luca Giuzzi\\
D.I.C.A.T.A.M. \\
Università degli Studi di Brescia,\\
Via Branze, 43-25123 Brescia, Italy\\
\texttt{luca.giuzzi@unibs.it}
\end{minipage}

\vspace{0.8cm}

\noindent
\begin{minipage}{\textwidth}
Giovanni Longobardi\\
Dipartimento di Matematica ed Applicazioni 'R. Caccioppoli',\\
Università degli Studi di Napoli Federico II,\\
Via Cintia, Monte S.Angelo I-80126 Napoli, Italy\\
\texttt{giovanni.longobardi@unina.it}
\end{minipage}

\end{document}